\documentclass[12pt]{amsart}
\usepackage{amsmath,amsthm,amsfonts,amscd,amssymb,eucal,latexsym, fullpage}
\usepackage{mathrsfs}
\usepackage[numbers]{natbib}
\usepackage[fit]{truncate}
\usepackage{fullpage}
\usepackage{graphicx}
\usepackage{epsfig}
\usepackage{color,soul}
\usepackage[all,cmtip]{xy}
\usepackage[top=1.125in, bottom=1.125in, left=1.25in, right=1.25in]{geometry}
\newcommand{\truncateit}[1]{\truncate{0.8\textwidth}{#1}}
\newcommand{\scititle}[1]{\title[\truncateit{#1}]{#1}}

\theoremstyle{plain}
\newtheorem{theorem}{Theorem}[section]
\newtheorem{corollary}[theorem]{Corollary}
\newtheorem{lemma}[theorem]{Lemma}

\theoremstyle{definition}

\newtheorem{remark}[theorem]{Remark}

\def\ti{\tilde}       
   \def\ep{\varepsilon}

\newcommand{\R}{\mathbb{R}}

\newcommand{\Z}{\mathbb{Z}}

\newcommand{\Mm}{\mathcal{M}}
\newcommand{\Bb}{\mathcal{B}}
\newcommand{\Nn}{\mathcal{N}}
\newcommand{\Zz}{\mathcal{Z}}
\newcommand{\Cc}{\mathcal{C}}

\newcommand{\Tt}{\mathcal{T}}
\newcommand{\Oo}{\mathcal{O}}

\newcommand{\Ee}{\mathcal{E}}

\newcommand{\Nc}{\mathfrak{N}}

\makeatletter
\newcommand*\owedge{\mathpalette\@owedge\relax}
\newcommand*\@owedge[1]{%
  \mathbin{%
    \ooalign{%
      $#1\m@th\bigcirc$\cr
      \hidewidth$#1\m@th\wedge$\hidewidth\cr
    }%
  }%
}
\makeatother

\DeclareMathOperator{\mass}{mass}

\DeclareMathOperator{\diam}{diam}

\DeclareMathOperator{\length}{length}

\DeclareMathOperator{\HF}{FH}

\DeclareMathOperator{\vol}{vol}
\DeclareMathOperator{\Vol}{Vol}
\DeclareMathOperator{\Ein}{Ein}

\DeclareMathOperator{\Ric}{Ric}
\DeclareMathOperator{\Rm}{Rm}

\scititle{Homological Filling and Minimal Varifolds\\
in Four-Dimensional Einstein Manifolds}

\author{Wenjie Fu and Zhifei Zhu}
\date{}

\begin{document}
\maketitle

\begin{abstract}
We study the smallest area $A_{\min}(M,g)$ of a $2$-dimensional stationary integral varifold in a closed Einstein $4$-manifold $(M^4,g)$ with
\[
\Ric_g = \lambda g,\quad |\lambda|\le 3,\quad \Vol(M,g)\ge v>0,\quad \diam(M,g)\le D,\quad H_1(M;\Z)=0.
\]
Building on the previous work \cite{LW_HF1} on homological filling functions, we show that for every $(M^4,g)$ in this Einstein class there is an upper bound
\[
A_{\min}(M,g)\le F_{\Ein}(v,D),
\]
where $F_{\Ein}$ depends only on $(v,D)$. The proof uses the Einstein-specific analytic input of a uniform Sobolev inequality, a global $L^2$-curvature bound, and standard $\varepsilon$-regularity/harmonic-radius estimates.
\end{abstract}

\section{Introduction}

Let $(M^4,g)$ be a closed Riemannian manifold and denote by $A_{\min}(M,g)$ 
the area of a smallest $2$-dimensional stationary integral varifold in $(M,g)$.
In general, the existence of such a minimal object follows from the
Almgren--Pitts min--max theory \cite{pitts2014existence}, and in low
codimension and low dimension one can often upgrade the varifold to a smooth
embedded minimal hypersurface using the regularity theory of
Schoen--Simon--Yau \cite{schoen1975,schoen1981regularity}.

In this article we are interested in bounding $A_{\min}(M,g)$ in terms of
global geometric data in the special case when $(M,g)$ is an Einstein
$4$-manifold subject to natural non-collapsing and diameter bounds.  
Throughout we assume that
\begin{equation}\label{eq:Ein-assumptions}
 \Ric_g = \lambda g,\quad |\lambda|\le 3,\quad \Vol(M,g)\ge v>0,\quad
 \diam(M,g)\le D,\quad H_1(M;\Z)=0.
\end{equation}
We write $\Ee_1(4,v,D)$ for the class of all closed Einstein $4$-manifolds
satisfying \eqref{eq:Ein-assumptions}.

This paper is a sequel to our previous work \cite{LW_HF1}, where we considered
the larger class with $ |{\rm Ric}_g|\le 3,\ \Vol(M,g)\ge v>0,
    \diam(M,g)\le D,\ H_1(M;\Z)=0, $
and proved an affine upper bound for the first homological filling function
$\HF_1$ of $(M,g)$.  More precisely, for every $(M^4,g)\in\Mm_1(4,v,D)$ we
showed that there exist functions $f_1(v,D),f_2(v,D)>0$ such that
\[
 \HF_1(l)\ \le\ f_1(v,D)\,l + f_2(v,D)\qquad\text{for all }l\ge 0.
\]
Combined with an effective version of the Almgren--Pitts min--max theory due to
Nabutovsky--Rotman \cite{nabutovsky2006curvature}, this yields
\[
 A_{\min}(M,g)\ \le\ F(v,D)
\]
for some function $F$ depending only on $v$ and $D$.  In that setting, however,
the dependence of $f_1,f_2$ and $F$ on the geometric parameters was not made
explicit, since the constants coming from the Cheeger--Naber bubble--tree
decomposition \cite{cheeger2014regularity} were only known to exist abstractly.

In the present work we revisit this question in the more rigid Einstein setting.
For $(M^4,g)\in\Ee_1(4,v,D)$ the curvature solves an elliptic system, and
global Sobolev inequalities can be used, following Anderson
\cite{anderson1989ricci,anderson1992thel}, to derive quantitative $L^2$–curvature
bounds and $\varepsilon$–regularity with constants depending only on a Sobolev
constant.  This analytic input, combined with the Einstein specialization of the
Cheeger--Naber bubble--tree decomposition and an improved combinatorial filling
estimate, allows us to obtain a quantitative upper bound for $A_{\min}(M,g)$ in
terms of $(v,D)$ and the Sobolev constants.

Our first main result is the following quantitative bound for minimal
$2$-varifolds in Einstein $4$-manifolds.

\begin{theorem}\label{thm:main}
For every $v,D>0$ there exists a constant $F_{\Ein}(v,D)>0$ with the following
property.  If $(M^4,g)\in \Ee_1(4,v,D)$, then the area $A_{\min}(M,g)$ of a
smallest $2$-dimensional stationary integral varifold in $(M,g)$ satisfies
\[
 A_{\min}(M,g)\ \le\ F_{\Ein}(v,D).
\]
Moreover, the dependence of $F_{\Ein}(v,D)$ is reduced to the Sobolev
constant $C_S(v,D)$, the global $L^2$-curvature bound, and the standard
Einstein $\varepsilon$-regularity constants.
\end{theorem}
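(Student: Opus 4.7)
The plan is to mirror the two-step architecture of \cite{LW_HF1} but to track every constant through the Einstein specialization, so the endpoint becomes an \emph{explicit} function of $(v,D,C_S)$. Concretely, I would first establish a linear bound of the form $\HF_1(l)\le f_1(v,D)\,l+f_2(v,D)$ for every $(M,g)\in\Ee_1(4,v,D)$, and then feed this into the effective Almgren--Pitts min--max scheme of Nabutovsky--Rotman \cite{nabutovsky2006curvature}, which produces a sweepout of $M$ by $1$-cycles of uniformly bounded length and turns a linear $\HF_1$ bound into the desired uniform bound $A_{\min}\le F_{\Ein}(v,D)$ on the area of a smallest stationary integral $2$-varifold.

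The first step is to collect the quantitative analytic inputs available under \eqref{eq:Ein-assumptions}. Anderson's work \cite{anderson1989ricci,anderson1992thel} provides a Sobolev constant $C_S=C_S(v,D)$; the Chern--Gauss--Bonnet identity, specialized to $\Ric_g=\lambda g$ with $|\lambda|\le 3$ and combined with the $(v,D)$-bounded Euler characteristic, then yields a global $L^2$-bound $\int_M|\Rm_g|^2\le K(v,D)$. The Einstein $\varepsilon$-regularity theorem supplies a threshold $\varepsilon=\varepsilon(C_S)>0$ such that whenever $\int_{B_r(x)}|\Rm_g|^2<\varepsilon$ one has $|\Rm_g|\le C\,r^{-2}$ on $B_{r/2}(x)$. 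A covering/ball-packing argument splits $M$ into a bounded number $N(v,D)$ of \emph{regular} balls of definite radius $r_0(v,D)$ on which the geometry is $C^{1,\alpha}$-close to Euclidean, together with a controlled collection of \emph{bubble} regions absorbing the concentrating $L^2$-curvature.

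With this decomposition in hand I would fill a $1$-cycle $\gamma$ of length $l$ as follows. Subdivide $\gamma$ into arcs of length $\le r_0(v,D)$, each contained in a regular ball where a uniform linear isoperimetric inequality holds; close off each arc by a short geodesic to a base point and cap it by a disk of area bounded by a definite function of $(v,D,C_S)$. The caps are then patched across overlaps using a combinatorial Mayer--Vietoris argument exactly as in \cite{LW_HF1}. For arcs entering a bubble region I would use the Einstein specialization of the Cheeger--Naber bubble-tree \cite{cheeger2014regularity}: bubble limits are complete Ricci-flat ALE Einstein orbifolds whose topology is bounded in terms of $\varepsilon(C_S)$, and the hypothesis $H_1(M;\Z)=0$ then forces each local $1$-cycle near a bubble to be null-homologous inside a definite-size neighborhood with a filling of area bounded by $(v,D,C_S)$ alone. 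Summing over the at most $O(l/r_0)$ arcs and the finitely many bubble regions yields the desired linear bound on $\HF_1$.

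The main obstacle I anticipate is the final explicitness claim. The abstract Cheeger--Naber statement delivers its constants only qualitatively, through an induction on scales whose depth and constants a priori depend uncontrollably on the geometry. Making everything explicit requires running this induction in the Einstein-rigid setting, where the tangent cones at singular points are Einstein cones and the bubble limits are ALE Einstein, so that at each new scale the $\varepsilon$-regularity threshold, the number of new bubbles, and the resulting volume/topology of the excised neck can each be bounded by explicit functions of $(v,D,C_S)$. Propagating these effective constants cleanly through the filling construction -- and keeping the induction from blowing up into a tower -- is the technical heart of the argument, and the rest of the theorem is bookkeeping on top of \cite{LW_HF1} and \cite{nabutovsky2006curvature}.
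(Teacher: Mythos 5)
Your two-step architecture (a linear bound on $\HF_1$ followed by the Nabutovsky--Rotman min--max theorem) is exactly the paper's, and your list of analytic inputs (Sobolev constant $C_S(v,D)$, Chern--Gauss--Bonnet for the $L^2$ curvature bound, Moser/$\varepsilon$-regularity, Einstein specialization of the Cheeger--Naber bubble tree) matches the paper's Section~2 almost point by point. So the qualitative conclusion $A_{\min}\le F_{\Ein}(v,D)$ is reachable along your route.

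There is, however, a genuine gap in the explicitness claim, and you appear to have mislocated where the difficulty actually lies. You defer the combinatorial step to ``a combinatorial Mayer--Vietoris argument exactly as in \cite{LW_HF1},'' but the paper explicitly flags that this is not enough: the constants in the combinatorial filling used in \cite{LW_HF1} are precisely the ones that were only known to exist abstractly, and the one genuinely new ingredient here is Lemma~\ref{lem:comb-improved}, which replaces that step by an \emph{explicit} bound on integer solutions of the linear system $\partial_{k+1}x=c_k$ via the Borosh--Flahive--Rubin--Treybig theorem combined with Hadamard's inequality. This gives a filling $2$-chain in the nerve $\Nc$ with coefficients controlled by an explicit function of the number of vertices $n_0\le\ti N_E(v,D)$ alone, independent of the dimension of $\Nc$. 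Without it, your argument reproduces the non-explicit bound of \cite{LW_HF1} rather than the explicit $F_{\Ein}(v,D)$ asserted in the theorem. By contrast, the Cheeger--Naber bubble-tree induction that you identify as ``the technical heart'' is handled in the paper essentially by citation, with the depth and branching controlled by $\mathbf A(v,D)=C_{L^2}/\varepsilon_{\mathrm{Ein}}$; it is not where the new work happens.

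A secondary issue: your treatment of bubble regions is off. You claim that $H_1(M;\Z)=0$ ``forces each local $1$-cycle near a bubble to be null-homologous inside a definite-size neighborhood,'' but global vanishing of $H_1$ gives no such local statement --- a loop linking a bubble can be nontrivial in the annular neck while bounding only far away. The paper instead uses the neck-contraction Lemma~\ref{lem:neck-contraction}: radial projection onto a cross-section $S^3/\Gamma$ sweeps out a controlled $2$-chain, and the finiteness of $H_1(S^3/\Gamma)$ (with $|\Gamma|$ bounded in terms of $\mathbf A(v,D)$) lets one push the cycle into the adjacent body with bounded length increase. This is the step you would need to substitute for your local-null-homology claim.
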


As in \cite{LW_HF1}, the proof of Theorem~\ref{thm:main} proceeds via a
homological filling inequality expressed in terms of the first homological
filling function $\HF_1$ of $(M,g)$.

\begin{theorem}\label{thm:Ein-fill}
For every $v,D>0$ there exist functions $$f_1^{\Ein}(v,D),f_2^{\Ein}(v,D)>0, $$
with the following property.  If $(M^4,g)\in \Ee_1(4,v,D)$ and
$C\in\Zz_1(M;\Z)$ is a singular Lipschitz $1$-cycle, then $C$ bounds a
singular $2$-chain $E\in\Cc_2(M;\Z)$ such that
\[
 \mass_2(E)\ \le\ f_1^{\Ein}(v,D)\,\mass_1(C) + f_2^{\Ein}(v,D).
\]
Equivalently, the first homological filling function of $(M,g)$ satisfies
\[
 \HF_1(l)\ \le\ f_1^{\Ein}(v,D)\,l + f_2^{\Ein}(v,D)
 \qquad\text{for all }l\ge 0.
\]
\end{theorem}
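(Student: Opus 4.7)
The plan is to follow the strategy of \cite{LW_HF1} but to replace each qualitative compactness step by an explicit Einstein counterpart, so that every constant is traceable to $(v,D)$ and the Sobolev constant $C_S(v,D)$. The starting point is the bubble-tree decomposition of \cite{cheeger2014regularity}, which in the Einstein setting can be made effective using Anderson's Sobolev inequality \cite{anderson1989ricci,anderson1992thel}, a uniform global $L^2$-curvature bound $\int_M |\Rm_g|^2\, d\vol_g \le \Lambda_0(v,D)$ (coming from Chern--Gauss--Bonnet together with the diameter/volume bounds), and an explicit Einstein $\varepsilon$-regularity threshold $\varepsilon_0$. These inputs decompose $(M,g)$ into a \emph{regular part} $M_{\mathrm{reg}}$ whose $C^{1,\alpha}$-harmonic radius is bounded below by $r_0(v,D)$, and at most $N_0(v,D)$ \emph{bubble regions}, each $\varepsilon_0$-close to a complete Ricci-flat ALE space.

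On $M_{\mathrm{reg}}$ I would take a Vitali cover by balls of radius $r_0$ and form its simplicial nerve $N_{\mathrm{reg}}$; the harmonic coordinate bound gives an explicit bi-Lipschitz equivalence between each ball and a Euclidean ball, so there are natural barycentric maps $\varphi_{\mathrm{reg}}:M_{\mathrm{reg}}\to |N_{\mathrm{reg}}|$ and $\psi_{\mathrm{reg}}:|N_{\mathrm{reg}}|\to M_{\mathrm{reg}}$ of explicitly controlled Lipschitz constants. Inside each bubble region I would attach a simplicial cap built from a truncated ALE model. Because every ALE bubble either is simply connected or has finite cyclic $\pi_1$ acting freely on $S^3$, and because $H_1(M;\Z)=0$ globally, the caps may be chosen so that the resulting global nerve $N$ satisfies $H_1(N;\Z)=0$. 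The total complex has at most $N_1(v,D)$ simplices and comes equipped with maps $\varphi:M\to |N|$ and $\psi:|N|\to M$ such that $\psi\circ\varphi\simeq\mathrm{id}_M$ via a homotopy whose tracks have controlled length.

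Given $C\in\Zz_1(M;\Z)$ of mass $l$, simplicial approximation deforms $\varphi(C)$ into the $1$-skeleton of $N$, producing a combinatorial $1$-cycle $\widetilde C$ of edge-length at most $C_1(v,D)\,l + C_2(v,D)$. Since $H_1(N;\Z)=0$ and $N$ is a finite simplicial complex of bounded combinatorial diameter, a standard cone/tree argument yields a combinatorial $2$-chain $E_N\in\Cc_2(N;\Z)$ with $\partial E_N = \widetilde C$ and $\mass_2(E_N)\le C_3(v,D)\,\mass_1(\widetilde C) + C_4(v,D)$. Pushing $E_N$ back to $M$ via $\psi$ and adjoining the $2$-chain swept out by the homotopy $\psi\circ\varphi\simeq\mathrm{id}_M$ along $C$ produces the desired filling $E\in\Cc_2(M;\Z)$, whose mass is then linear in $l$ with the advertised constants.

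The main obstacle is quantitative control of the fillings inside the necks and nested bubbles. A $1$-cycle can concentrate a macroscopic fraction of its mass inside a long thin Ricci-flat neck modeled on $(S^3/\Gamma)\times[0,L]$, and bounding the local filling there by a constant times $L$ (rather than by a logarithmic or polynomial function of $L$) requires a careful scale-by-scale execution of the bubble-tree induction of \cite{cheeger2014regularity}, with the Einstein $\varepsilon$-regularity applied at each level. Ensuring that the constants compose \emph{additively} rather than multiplicatively as one descends the tree, and that each neck contributes its own bounded filling constant independent of the depth, is where the explicit Sobolev and $\varepsilon$-regularity bounds from the Einstein condition are genuinely needed and where the bulk of the technical work is concentrated.
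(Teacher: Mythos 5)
Your plan follows the same high-level architecture as the paper's proof of Theorem~\ref{thm:Ein_filling}: bubble-tree decomposition, nerve of an adapted cover, combinatorial filling in the nerve, push-back to the manifold. The most important technical step, however, is glossed over, and one secondary step is routed differently from the paper.

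\textbf{The combinatorial filling constant.} You assert that, since $H_1(N;\Z)=0$ and $N$ has boundedly many simplices, ``a standard cone/tree argument'' gives $\mass_2(E_N)\le C_3(v,D)\,\mass_1(\widetilde C)+C_4(v,D)$. A spanning-tree decomposition does reduce this to bounding, for each non-tree edge $e$, the $\ell^1$-norm of an integer $2$-chain $E_e$ filling the fundamental cycle $c_e$; but there is no a priori bound on $\mass_2(E_e)$ in terms of the number of vertices of $N$ alone, since the smallest integer solution of $\partial_2 x=c_e$ is controlled by minors of the boundary matrix, not by the combinatorial diameter. One can retreat to the compactness argument ``there are only finitely many simplicial complexes on $\le n_0$ vertices,'' but that is exactly the abstract step the paper sets out to eliminate. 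The paper's one new ingredient is Lemma~\ref{lem:comb-improved}, which applies the Borosh--Flahive--Rubin--Treybig bound for integer solutions of linear Diophantine systems together with Hadamard's inequality (the entries of $\partial_2$ lie in $\{0,\pm1\}$, so every $m\times m$ minor has absolute value $\le m^{m/2}$), yielding the explicit estimate $\max_i|a_2^i|\le\binom{n_0}{2}^{\frac12\binom{n_0}{2}}\max_i|a_1^i|$. Without this, your argument proves the existence of $f_1^{\Ein},f_2^{\Ein}$ only non-constructively, and does not deliver the explicit dependence on $C_S(v,D)$ and the $\varepsilon$-regularity constants asserted in Theorem~\ref{thm:main}.

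\textbf{Necks and the tree.} Your closing paragraph correctly flags the danger of a cycle concentrating mass in a long neck and of constants compounding down the bubble tree, but a ``scale-by-scale induction'' is not what the paper does and would risk worse-than-linear bounds. Instead, the cycle is cut once into at most $N_E(v,D)\,k_E(v,D)$ pieces, each supported in a single body or neck (Lemma~\ref{lem:decompose}), with total mass inflated only by a factor $C_0(v,D)$; each neck piece is then radially projected onto a sphere $S_\lambda/\Gamma$, producing a $2$-chain of mass $\lesssim\rho\,\mass_1(C_i)$ and a short remainder cycle in the adjacent body (Lemma~\ref{lem:neck-contraction}). Because the decomposition is a one-step cut, the filling constants add across pieces rather than multiply down the tree depth, and no induction over levels is needed.

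\textbf{Push-back construction.} You propose building $\psi:|N|\to M$ and a controlled homotopy $\psi\circ\varphi\simeq\mathrm{id}_M$. The paper instead uses a concrete geodesic graph $\Gamma\subset M$, with vertices at cover centers and edges of uniformly bounded length, and fills each $2$-simplex of the nerve by the corresponding geodesic triangle in $\Gamma$, coned inside a harmonic ball (body case) or inside an annular neck model (neck case) via Lemmas~\ref{lem:fill-ball}, \ref{lem:neck-contraction}, and~\ref{lem:triangles}. This avoids constructing $\psi$ on the whole polyhedron and controlling global homotopy tracks; it is more elementary and easier to make quantitative, and I would recommend adopting it.
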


Theorem~\ref{thm:main} is then an immediate consequence of
Theorem~\ref{thm:Ein-fill} and the min–max theorem of Nabutovsky--Rotman
\cite{nabutovsky2006curvature}, which relates $A_{\min}(M,g)$ to $\HF_1$.
In the Einstein setting we can track the dependence of the constants arising from the Sobolev
inequality, the $L^2$-curvature bound and the bubble-tree decomposition,
thus reducing the dependence of $F_{\Ein}(v,D)$ to these analytic and combinatorial inputs.

\section{Sobolev inequality and Einstein $\varepsilon$-regularity}\label{sec:Einstein_prelim}

Throughout this section we assume $(M^4,g)\in \Ee_1(4,v,D)$.

\subsection{Global Sobolev inequality}

We use the $L^2$-Sobolev inequality on $(M,g)$, which is equivalent to a uniform
isoperimetric inequality under our assumptions.

\begin{theorem}[Sobolev inequality]\label{thm:sobolev}
There exists a constant $C_S(v,D)>0$ depending only on $v,D$ such that for every
$u\in C^\infty(M)$ we have
\begin{equation}\label{eq:sobolev}
 \Bigl(\int_M |u|^{4}\Bigr)^{1/2}
 \ \le\ C_S(v,D)\left( \int_M |\nabla u|^2 + \frac{1}{D^2}\int_M u^2\right).
\end{equation}
\end{theorem}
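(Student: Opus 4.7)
The plan is to deduce \eqref{eq:sobolev} from standard Riemannian comparison geometry. The Einstein equation enters only through its consequence $\Ric_g\ge -3g$; no higher-order Einstein regularity is needed at this stage.

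First, I would apply Bishop--Gromov volume comparison to $\Ric_g\ge -3g$. Combined with $\Vol(M,g)\ge v$ and $\diam(M,g)\le D$, this yields a uniform lower bound $\Vol(B(x,r))\ge v_0(v,D)\,r^4$ for every $x\in M$ and $r\in(0,D]$, and in particular a uniform volume doubling constant on $(M,g)$. Under a Ricci lower bound together with volume doubling, Buser's inequality (equivalently, the Saloff--Coste $(1,2)$-Poincar\'e inequality) produces a uniform local $(1,2)$-Poincar\'e inequality on balls of radius up to some $r_0(v,D)$. By the Maheux--Saloff--Coste theorem, the combination of doubling and Poincar\'e is equivalent to a local $(4,2)$-Sobolev inequality
\[
\Bigl(\int_{B(x,r)}|u|^4\Bigr)^{1/2}\le K(v,D)\Bigl(\int_{B(x,r)}|\nabla u|^2+r^{-2}\int_{B(x,r)}u^2\Bigr)
\]
on balls of radius $r\le r_0$, with constant depending only on $v$ and $D$.

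Next, I would cover $M$ by finitely many such balls of radius $r_0$ with bounded multiplicity; by Bishop--Gromov the cardinality of the cover is itself controlled by $v$ and $D$. Summing the local Sobolev inequalities via a subordinate partition of unity, and absorbing $r_0^{-2}$ into the final constant (up to a factor depending on $v,D$), yields the global estimate \eqref{eq:sobolev} with the desired $D^{-2}$ weight on the lower-order term.

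The main obstacle is the careful bookkeeping of constants: one must track how the doubling constant, the local Poincar\'e constant, the covering number, and the patching constant each depend on $v$ and $D$, and verify that no additional geometric quantity slips into the final estimate. A more direct alternative is to invoke Ilias's Sobolev inequality under Ricci lower bounds, which packages these steps into a single statement and immediately delivers an estimate of the form \eqref{eq:sobolev}; the covering-plus-partition argument above is essentially an unpacking of that result, and is preferable here because it makes the dependence of $C_S$ on the geometric data transparent for use in later sections.
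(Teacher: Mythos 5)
Your proposal is correct and arrives at the same conclusion, but it routes through a genuinely different part of the standard toolbox. The paper first uses Bishop--Gromov under $\Ric_g\ge -3g$ together with $\Vol\ge v$, $\diam\le D$ to establish non-collapsing at unit scale, derives from this a uniform isoperimetric inequality for domains of at most half the total volume, and then appeals to the classical Federer--Fleming-type equivalence (citing Hebey) between an isoperimetric inequality and an $L^2$-Sobolev inequality for zero-mean functions; the general case follows by writing $u=(u-\bar u)+\bar u$ and estimating the constant part by Cauchy--Schwarz. You instead go through volume doubling, Buser's (equivalently Saloff-Coste's) local $(1,2)$-Poincar\'e inequality under a Ricci lower bound, the Maheux--Saloff-Coste equivalence of doubling-plus-Poincar\'e with a local Neumann Sobolev inequality, and finally a covering-and-patching argument with a partition of unity. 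Both routes use only the consequence $\Ric_g\ge -3g$ of the Einstein hypothesis and produce constants depending only on $(v,D)$. The paper's path is shorter because the isoperimetric-to-Sobolev step is already global, so no covering or partition of unity is needed; your path is somewhat longer but sits naturally within the heat-kernel/Harnack framework and arguably makes the constant dependence more mechanical. Your remark that Ilias's theorem under a Ricci lower bound packages all of this into a single citation is apt and would be the quickest clean route, at the cost of hiding the constant-tracking. One small point of care in the patching step: applying the local Neumann Sobolev inequality to $u\phi_\alpha$ produces a $\|u\,\nabla\phi_\alpha\|_{L^2}^2$ term, which is of order $r_0^{-2}\|u\|_{L^2}^2$ provided the partition of unity is chosen with $|\nabla\phi_\alpha|\lesssim r_0^{-1}$, and summing over a cover of bounded multiplicity then requires the elementary $\sum_\alpha a_\alpha^2\le(\sum_\alpha a_\alpha)^2$; this is all routine, but it is exactly the bookkeeping you flag and should be written out if this route is adopted.
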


\begin{proof}
Under our standing assumptions we have $\Ric_g\ge -3g$, $\Vol(M)\ge v$ and
$\diam(M)\le D$. A standard consequence is a uniform isoperimetric inequality,
hence a uniform Sobolev inequality for mean-zero functions; see, for instance,
\cite{Hebey2000nonlinear,anderson1989ricci}. Thus there exists $C'_S(v,D)$ such that
\[
 \|u-\bar u\|_{L^4(M)}^2 \le C'_S(v,D)\,\|\nabla u\|_{L^2(M)}^2,
 \qquad \bar u:=\Vol(M)^{-1}\int_M u.
\]
In addition, Bishop--Gromov comparison with $\Ric_g\ge -3g$ and $\diam(M)\le D$
gives an upper volume bound $\Vol(M)\le V_+(D)$. Since
\[
 |\bar u|^2 \le \Vol(M)^{-1}\|u\|_{L^2(M)}^2 \le v^{-1}\|u\|_{L^2(M)}^2,
\]
we obtain
\[
 \|\bar u\|_{L^4(M)}^2
 = |\bar u|^2\,\Vol(M)^{1/2}
 \le v^{-1}V_+(D)^{1/2}\,\|u\|_{L^2(M)}^2
 \le C''_S(v,D)\,\frac{1}{D^2}\|u\|_{L^2(M)}^2,
\]
after enlarging the constant. Combining the bounds for $u-\bar u$ and $\bar u$
yields \eqref{eq:sobolev}.
\end{proof}

\begin{remark}
The existence of such a Sobolev constant under a Ricci lower bound, a volume lower bound and
a diameter upper bound is classical; see, for example,\cite{Hebey2000nonlinear}.
\end{remark}

\subsection{$L^2$ curvature and the curvature equation}

We use the local $L^2$ curvature estimate of Cheeger--Naber together with a covering argument to obtain a global bound in our setting.

\begin{theorem}[$L^2$ curvature bound]\label{thm:L2}
There exists $C_{L^2}(v,D)>0$ such that
\[
 \int_M |\Rm_g|^2 \le C_{L^2}(v,D)
 \qquad \text{for all }(M^4,g)\in\Ee_1(4,v,D).
\]
\end{theorem}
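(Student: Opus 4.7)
The plan is to use the dimension-four Chern--Gauss--Bonnet identity, which in the Einstein case reduces the $L^2$ curvature to a topological invariant, and then to invoke Anderson's compactness/finiteness theorem to bound that invariant uniformly on $\Ee_1(4,v,D)$.

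First, I would write the Chern--Gauss--Bonnet formula in dimension four as
$$
 32\pi^2\,\chi(M)\;=\;\int_M \bigl(|\Rm_g|^2\;-\;4|\Ric_g|^2\;+\;R_g^2\bigr)\,dV_g.
$$
For any Einstein metric $\Ric_g=\lambda g$ one has $|\Ric_g|^2=4\lambda^2$ and $R_g=4\lambda$, so $R_g^2 = 4|\Ric_g|^2$ pointwise, and the Gauss--Bonnet integrand collapses to $|\Rm_g|^2$. Hence
$$
 \int_M |\Rm_g|^2\,dV_g\;=\;32\pi^2\,\chi(M),
$$
which is the simplification alluded to in the excerpt: for Einstein metrics in dimension four the $L^2$ curvature is \emph{exactly} a topological number, with no error term.

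Second, I would bound $\chi(M)$ uniformly on $\Ee_1(4,v,D)$ via Anderson's compactness theorem for Einstein $4$-manifolds (\cite{anderson1989ricci,anderson1992thel}): under the assumptions $|\Ric_g|\le 3$, $\Vol(M,g)\ge v$, and $\diam(M,g)\le D$, the class $\Ee_1(4,v,D)$ is precompact in the $C^{1,\alpha}$ Cheeger--Gromov topology modulo finitely many isolated orbifold singularities, and in particular the underlying smooth manifolds realize only finitely many diffeomorphism types. Consequently $\chi(M)\le \chi_{\max}(v,D)$, and setting $C_{L^2}(v,D):=32\pi^2\,\chi_{\max}(v,D)$ completes the argument.

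The main obstacle is the second step: the finiteness of $\chi$ on $\Ee_1(4,v,D)$ is a nontrivial input, resting on Anderson's $L^{n/2}$-curvature compactness together with the Einstein $\varepsilon$-regularity and the orbifold bubble analysis of Bando--Kasue--Nakajima (equivalently, the Einstein specialization of \cite{cheeger2014regularity}). In the larger Ricci-bounded class $\Mm_1(4,v,D)$ of \cite{LW_HF1}, one cannot identify $\int|\Rm_g|^2$ with a topological quantity, and the analogous bound requires the full Cheeger--Naber bubble-tree $L^2$ energy accounting; in the Einstein case the Chern--Gauss--Bonnet identity replaces that bookkeeping by a single topological input, which is precisely why the proof is simpler here.
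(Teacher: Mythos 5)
Your proof is correct and follows essentially the same route as the paper: use the dimension-four Chern--Gauss--Bonnet formula, observe that for Einstein metrics the Ricci and scalar terms cancel so that $\int_M|\Rm|^2$ equals a fixed multiple of $\chi(M)$, and then bound $\chi(M)$ uniformly on $\Ee_1(4,v,D)$ via the Anderson/Cheeger--Naber finiteness theorem. (The only cosmetic difference is the constant: you get $32\pi^2\chi(M)$ using the full four-tensor norm of $\Rm$, while the paper's $8\pi^2\chi(M)$ uses the curvature-operator normalization implicit in its Weyl--Ricci decomposition; both are internally consistent.)
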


\begin{proof}
Since $|\Ric_g|\le 3$, Bishop--Gromov comparison and the bounds
$\Vol(M)\ge v$, $\diam(M)\le D$ imply that there exists $v_1(v,D)>0$ such that
\[
 \Vol(B_1(x))\ge v_1(v,D)
 \qquad\text{for every }x\in M.
\]
Applying the local $L^2$ estimate of Cheeger--Naber
\cite[Theorem~1.13]{cheeger2014regularity} to each unit ball gives
\[
 \int_{B_1(x)} |\Rm|^2 \le C_{\mathrm{CN}}(v_1(v,D))
 \qquad\text{for every }x\in M.
\]
Now choose a maximal $\frac12$-separated set $\{x_1,\dots,x_N\}\subset M$.
Then the balls $B_1(x_i)$ cover $M$, while the balls $B_{1/4}(x_i)$ are disjoint.
Using again Bishop--Gromov comparison together with the diameter bound, we obtain
$N\le N(v,D)$ for some constant depending only on $(v,D)$. Therefore
\[
 \int_M |\Rm|^2
 \le \sum_{i=1}^N \int_{B_1(x_i)} |\Rm|^2
 \le N(v,D)\,C_{\mathrm{CN}}(v_1(v,D)).
\]
This gives the desired global bound.
\end{proof}

Next we recall the elliptic equation satisfied by the curvature tensor. For an Einstein
metric $\Ric_g=\lambda g$, recall the Weitzenb\"{o}ck formula for $\Rm$:
\begin{equation}\label{eq:PDE-Rm}
 \Delta \Rm = \Rm * \Rm + \lambda\,\Rm,
\end{equation}
where $\Delta$ is the rough Laplacian and $\Rm * \Rm$ denotes a quadratic expression in
$\Rm$; see for example \cite{besse1978einstein}. Let
\[
 u := |\Rm|.
\]
Then, using the standard Bochner formula and Kato's inequality $|\nabla u|\le |\nabla \Rm|$,
one obtains a scalar differential inequality of the form
\begin{equation}\label{eq:PDE-u}
 -\Delta u \ \le\ C_0\,(u^2+u)
\end{equation}
for some dimensional constant $C_0>0$. We briefly justify this.

Indeed, compute
\[
 \frac12\Delta u^2
 = \langle \Delta \Rm,\Rm\rangle + |\nabla \Rm|^2.
\]
Using \eqref{eq:PDE-Rm} and the Cauchy--Schwarz inequality,
\[
 \langle \Delta \Rm,\Rm\rangle
 = \langle \Rm*\Rm,\Rm\rangle + \lambda |\Rm|^2
 \ge -C\,|\Rm|^3 - C\,|\Rm|^2
\]
for some dimensional constant $C>0$. Therefore
\[
 \frac12\Delta u^2 \ge -C u^3 - C u^2 + |\nabla \Rm|^2.
\]
On the other hand,
\[
 \frac12\Delta u^2
 = u\Delta u + |\nabla u|^2.
\]
Combining these identities and using Kato's inequality $|\nabla u|\le|\nabla\Rm|$ yields
\[
 u\Delta u + |\nabla u|^2
 \ge -C u^3 - C u^2 + |\nabla u|^2,
\]
hence
\[
 \Delta u \ge -C(u^2+u).
\]
Rewriting this as $-\Delta u\le C_0(u^2+u)$ with $C_0=C$ gives \eqref{eq:PDE-u}.

\subsection{Einstein $\varepsilon$-regularity}

The next result is standard in dimension four; we record it in the form needed
later.

\begin{theorem}[Einstein $\varepsilon$-regularity]\label{thm:eps_reg_Einstein}
There exist constants $\varepsilon_{\mathrm{Ein}}(v,D)>0$,
$C_{\mathrm{Ein}}(v,D)>0$ and $c_{\mathrm{Ein}}(v,D)>0$ such that the
following holds.

Let $(M^4,g)\in \Ee_1(4,v,D)$ and suppose that $B_{2r}(x)\subset M$ satisfies
\[
 \int_{B_{2r}(x)} |\Rm|^2 \le \varepsilon_{\mathrm{Ein}}(v,D).
\]
Then
\[
 \sup_{B_r(x)} |\Rm| \le C_{\mathrm{Ein}}(v,D)\, r^{-2},
 \qquad
 r_h(y) \ge c_{\mathrm{Ein}}(v,D)\,r \quad\text{for all } y\in B_r(x).
\]
\end{theorem}

\begin{proof}
After rescaling we may assume $r=1$. The hypotheses place us in the standard
noncollapsed Einstein setting: the Ricci tensor is uniformly bounded, local
volume ratios are bounded below in terms of $(v,D)$, and the Sobolev constant
is controlled by Theorem~\ref{thm:sobolev}. The usual $\varepsilon$-regularity
theorem for Einstein $4$-manifolds therefore applies and yields the pointwise
curvature bound together with the harmonic-radius estimate on $B_1(x)$; see
Anderson \cite{anderson1989ricci,anderson1992thel}, Cheeger--Tian
\cite{cheegerTian2006}, and Cheeger--Naber \cite{cheeger2014regularity}. 
Rescaling back proves the statement.
\end{proof}

\begin{remark}
In what follows we only use the existence of the constants in
Theorems~\ref{thm:sobolev}, \ref{thm:L2} and \ref{thm:eps_reg_Einstein}. The
global harmonic-radius scale entering the bubble--tree decomposition will be
taken directly from Cheeger--Naber's theorem.
\end{remark}

\subsection{Bubble--tree decomposition}

We state Cheeger--Naber's structure theorem \cite{cheeger2014regularity}, which will be used to construct a cover on the manifold.

\begin{theorem}[Bubble--tree decomposition]\label{thm:bubble_tree_Ein}
There exist constants $r_0(v,D)>0$, $C_\Gamma(v,D)>0$ and integers
$N_E(v,D)$, $k_E(v,D)$ together with a decomposition
\[
 M = \Bb^1 \cup \bigcup_{j_2=1}^{N_2}\Nn^{2}_{j_2}
       \cup \bigcup_{j_2=1}^{N_2}\Bb^{2}_{j_2}
       \cup \dots
       \cup \bigcup_{j_k=1}^{N_k}\Nn^{k}_{j_k}
       \cup \bigcup_{j_k=1}^{N_k}\Bb^{k}_{j_k}
\]
such that:
\begin{enumerate}
\item[(i)] (\emph{Bodies}) If $x\in \Bb^{i}_{j}$ then
\[
 r_h(x)\ge r_0(v,D)\,\diam(\Bb^{i}_{j}).
\]
\item[(ii)] (\emph{Necks}) Each $\Nn^{i}_{j}$ is diffeomorphic to an annulus
in $\R^4/\Gamma^{i}_{j}$, with $|\Gamma^{i}_{j}|\le C_\Gamma(v,D)$.
\item[(iii)] Each $\Nn^{i}_{j}$ meets exactly two bodies, a parent
$\Bb^{i-1}_{\cdot}$ and a child $\Bb^i_j$, and their intersections are
diffeomorphic to $\R\times S^3/\Gamma^{i}_j$.
\item[(iv)] The constants $N_i\le N_E(v,D)$ and $k\le k_E(v,D)$.
\end{enumerate}
\end{theorem}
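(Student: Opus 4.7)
The plan is to follow the Cheeger--Naber bubble-tree construction, driven by the global $L^2$ curvature budget of Theorem~\ref{thm:L2} and the scale-invariant $\varepsilon$-regularity of Theorem~\ref{thm:eps_reg_Einstein}. The Einstein hypothesis plays a double role: it supplies the elliptic system \eqref{eq:PDE-Rm} and a quantitative tangent-cone description at each singular stratum, which together let us keep every constant an explicit function of $C_S(v,D)$ and $\mb A(v,D)$. The overall strategy is a scale-selection procedure that, at each level, either detects a body via $\varepsilon$-regularity or zooms in around a concentration point to produce a child body connected by a neck.

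First, I would extract the level-$1$ body. For each $x\in M$, set
\[
 s(x):=\sup\Bigl\{0<\rho\le D\ :\ \int_{B_{2\rho}(x)}|\Rm|^2\le \tfrac12\varepsilon_{\mathrm{Ein}}\Bigr\},
\]
and call $x$ a bubble point if $s(x)$ is less than a threshold $\sigma_1=\sigma_1(v,D)$ to be chosen. On the region $\{s(x)\ge\sigma_1\}$, Theorem~\ref{thm:eps_reg_Einstein} gives the uniform harmonic-radius bound $r_h(x)\ge r_0(v,D)\,\sigma_1$; a Vitali cover of this region, truncated by the diameter bound, yields a finite union of balls of controlled cardinality, which I take to be $\Bb^1$, verifying (i) at the top level.

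Then I would iterate near bubble points. The essential input is an energy-consumption lemma: at every bubble point and every descent by a fixed factor, the corresponding rescaled ball $B_{2\rho}$ captures at least $\tfrac12\varepsilon_{\mathrm{Ein}}$ of curvature energy not yet accounted for at larger scales. By Theorem~\ref{thm:L2} this can occur at most $2\mb A(v,D)$ times in total, which bounds both the depth $k\le k_E(v,D)$ and $\sum_i N_i\le N_E(v,D)$, giving (iv). After rescaling, the pointed sequence $(\lambda_k^{-2}g, x_k)$ subconverges in the Cheeger--Gromov topology (using Anderson's compactness for Einstein metrics with bounded Sobolev constant) to a Ricci-flat ALE space or Einstein orbifold with a single conical point of link $S^3/\Gamma$; the order $|\Gamma|$ is bounded by a Bishop-type comparison estimate relating the local $L^2$ curvature budget to the link volume, producing (ii). Running the same dichotomy on the rescaled limit creates the child bodies $\Bb^i_j$ at deeper levels.

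The last, and hardest, step is to identify the neck regions in (iii). On each transition annulus $A_{r_-,r_+}$ the rescaled metric is $C^{1,\alpha}$-close to a cone metric on $\R^4/\Gamma$; the Cheeger--Naber quantitative annular-region theorem then upgrades this Gromov--Hausdorff closeness to a genuine diffeomorphism with a standard annulus in $\R^4/\Gamma$, together with the stated product ends. This identification step is the main obstacle: in the general bounded-Ricci case it relies on the full Reifenberg/neck-structure machinery of \cite{cheeger2014regularity}. In our Einstein setting this obstruction is mitigated because the Einstein equation propagates harmonic-radius control to every derivative of the metric, so $C^0$-closeness of the rescaled metrics automatically implies $C^\infty_{\mathrm{loc}}$-closeness and hence a smooth diffeomorphism on each annular region; it is precisely this regularity upgrade that makes every constant $N_E$, $k_E$, $C_\Gamma$ an explicit function of $C_S(v,D)$ and $\mb A(v,D)$.
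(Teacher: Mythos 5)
The paper does not actually reprove this theorem: the text following the statement explicitly treats it as a specialization of Cheeger--Naber~\cite[Theorem~8.64]{cheeger2014regularity} to the Einstein class, with Theorems~\ref{thm:eps_reg_Einstein} and~\ref{thm:L2} supplying the quantitative $\varepsilon$-regularity and $L^2$-curvature inputs. Your proposal instead attempts to sketch the full construction from scratch, which is more ambitious than what the paper does, and while the outline (energy threshold, scale selection, blow-up limits, neck identification) is the right shape of argument, two steps are not yet sound.

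First, the ``energy-consumption lemma'' is asserted but not established, and as stated it is not obviously true: balls at successive scales around the same bubble point are nested, so a naive count does not produce disjoint energy quanta. The Cheeger--Naber selection argument is precisely engineered so that the regions being charged for energy have bounded overlap; one must separate the terminal body balls from the annular neck shells and show each contributes a definite, non-double-counted amount of $\int|\Rm|^2$. Your sketch needs this covering/disjointness argument to make the bound $\sum_i N_i + k \lesssim \mathbf A(v,D)$ legitimate, and to ensure the bodies and necks you extract actually form the claimed rooted tree rather than an overlapping family.

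Second, the neck-identification step is overclaimed. You invoke the Cheeger--Naber annular-region theorem and then assert that in the Einstein setting the obstruction ``is mitigated because the Einstein equation propagates harmonic-radius control to every derivative of the metric, so $C^0$-closeness of the rescaled metrics automatically implies $C^\infty_{\mathrm{loc}}$-closeness and hence a smooth diffeomorphism on each annular region.'' Regularity upgrade is genuine in the Einstein case (this is the Anderson/Bando--Kasue--Nakajima orbifold picture, which predates Cheeger--Naber), but $C^\infty_{\mathrm{loc}}$-closeness to a limit does not by itself produce a diffeomorphism onto a standard annulus in $\R^4/\Gamma$ with product ends as in item~(iii); you still need to identify the tangent cone at infinity of the ALE blow-up, show the convergence is to a metric cone over a spherical space form, and actually construct the diffeomorphism, typically by integrating an approximate dilation vector field or by the cone-splitting argument. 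That is the substantive content of the annular-region/neck-structure theorem, and regularity alone does not dispense with it. Since the paper simply cites Cheeger--Naber for this, you could instead follow the paper's lead and cite \cite[Theorem~8.64]{cheeger2014regularity} (or Anderson and Bando--Kasue--Nakajima in the Einstein case), explaining only how Theorems~\ref{thm:eps_reg_Einstein} and~\ref{thm:L2} feed in to make the constants explicit; otherwise these two steps need full proofs.
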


This is the specialization of Cheeger--Naber's bubble--tree construction
\cite[Theorem~8.63]{cheeger2014regularity} to the Einstein class. The displayed
decomposition above is their formula~(8.64). We denote the adjacency relations
between bodies and necks as a rooted tree with root $\Bb^1$ and denote by
$\Tt^i_j$ the subtree with root $\Bb^i_j$.

\section{Filling functions and stationary varifolds}\label{sec:HF1}

In this section we recall how an upper bound for the first homological filling
function controls the area of a smallest $2$-dimensional stationary integral
varifold, and we explain how to obtain such an upper bound in the Einstein
setting. The overall strategy follows our previous work~\cite{LW_HF1}: we
construct a cover adapted to the bubble--tree decomposition, pass to the
nerve and an associated geodesic graph, and then fill $1$-cycles by combining
local geometric fillings with a combinatorial filling argument.

The one new ingredient in the present paper is an improved
combinatorial filling lemma (Lemma~\ref{lem:comb-improved} below), based on
a sharp bound for integer solutions of linear Diophantine systems due to
Borosh--Flahive--Rubin--Treybig~\cite{BFRT1989} together with Hadamard's
inequality. For the reader's convenience we include a complete proof of this
lemma. All other steps are essentially the same as those in~\cite{LW_HF1},
up to the replacement of the general Ricci bounds by the Einstein
$\varepsilon$-regularity input from Section~\ref{sec:Einstein_prelim}, and
we therefore only sketch the arguments or omit proofs with precise references
to~\cite{LW_HF1}.

\subsection{Homological filling functions and stationary varifolds}

Let $\Cc_k(M;\Z)$ be the group of singular Lipschitz $k$-chains with integer
coefficients and $\Zz_k(M;\Z)=\ker\partial_k$ the group of $k$-cycles.
If $C=\sum_i a_i f_i\in\Cc_k(M;\Z)$ with $f_i:\Delta^k\to M$ Lipschitz,
we define its $k$-mass by
\[
 \mass_k(C)
 \;=\;\sum_i |a_i|\int_{\Delta^k} f_i^*(d\vol_g).
\]
Assuming $H_1(M;\Z)=0$, the \emph{first homological filling function} is
defined for $l\ge0$ by
\[
 \HF_1(l)
 \;=\;\sup_{\mass_1(Z)\le l}\;\inf_{\partial C=Z}\,\mass_2(C),
\]
where the supremum is taken over all singular Lipschitz $1$-cycles
$Z\in\Zz_1(M;\Z)$ with $\mass_1(Z)\le l$.

The connection between $\HF_1$ and minimal $2$-varifolds is provided by the
following theorem of Nabutovsky--Rotman.

\begin{theorem}[Nabutovsky--Rotman~{\cite{nabutovsky2006curvature}}]\label{thm:NR}
Let $M$ be a closed $n$-dimensional Riemannian manifold with $H_1(M;\Z)=0$
and diameter $\diam(M)=D$.  Let $A_{\min}(M)$ denote the area of a smallest
$2$-dimensional stationary integral varifold in $M$. Then
\[
 A_{\min}(M)\;\le\;\frac{(n+1)!}{2}\,\HF_1(2D).
\]
In particular, for $n=4$,
\[
 A_{\min}(M)\;\le\;60\,\HF_1(2D).
\]
\end{theorem}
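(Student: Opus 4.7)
The plan is to combine Almgren--Pitts min--max theory with an explicit geodesic sweepout of $M$, using $H_1(M;\Z)=0$ and the filling function $\HF_1$ to control masses.

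First I would set up the min--max step. By Almgren's isomorphism $\pi_{n-2}(\Zz_2(M;\Z),\{0\})\cong H_n(M;\Z)$, the fundamental class of the closed $n$-manifold $M$ is represented by a non-trivial $(n-2)$-parameter sweepout $\Phi:I^{n-2}\to\Zz_2(M;\Z)$ whose boundary is sent to $0$. The Almgren--Pitts min--max theorem then yields a stationary integral $2$-varifold $V$ with
\[
 \mass_2(V)\ \le\ \inf_\Phi \sup_{t\in I^{n-2}}\mass_2(\Phi(t)).
\]
The task thus reduces to exhibiting a sweepout whose fiberwise mass is bounded by $\tfrac{(n+1)!}{2}\HF_1(2D)$.

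Next I would construct such a sweepout from a geodesic simplicial model. Fix a basepoint $p_0\in M$ and, using $\diam(M)=D$, attach to every $x\in M$ a minimizing geodesic $\gamma_x$ of length at most $D$. Triangulate the parameter cube $I^{n-2}$ and use a geodesic-cone construction to produce, for each $t\in I^{n-2}$, a singular $2$-cycle $\Phi(t)$ built out of elementary cells whose boundary $1$-cycles have length at most $2D$ (for example, each elementary boundary can be arranged as the difference of two geodesic arcs through $p_0$ with common endpoints, of total length $\le 2D$). Applying the hypothesis on $\HF_1$, each such $1$-cycle bounds a $2$-chain in $\Cc_2(M;\Z)$ of mass at most $\HF_1(2D)$. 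A simplicial-counting argument then shows that each fiber $\Phi(t)$ can be written as an integer combination of at most $\tfrac{(n+1)!}{2}$ such elementary $2$-chains, yielding the desired mass bound.

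The main obstacle will be realizing the specific cutoff $2D$ (rather than the naive $3D$ obtained from the perimeter of a geodesic triangle) and sharpening the combinatorial factor to exactly $\tfrac{(n+1)!}{2}$. The first issue requires producing each elementary $1$-cycle to be filled as a difference of two geodesic chords through the basepoint rather than as the boundary of a generic geodesic triangle; the second amounts to an orientation-coherent barycentric subdivision of the $(n-2)$-parameter cube in which each oriented $2$-face of a top-dimensional simplex is counted exactly once. Both refinements are combinatorial in nature and are the technical heart of Nabutovsky--Rotman's original argument, requiring no additional geometric input beyond the filling inequality and the diameter bound.
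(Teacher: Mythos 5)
The paper does not prove Theorem~\ref{thm:NR}; it is quoted verbatim from Nabutovsky--Rotman~\cite{nabutovsky2006curvature} and used as a black box to pass from the filling estimate of Theorem~\ref{thm:Ein-fill} to the area bound of Theorem~\ref{thm:main}. There is therefore no in-paper proof against which to compare your attempt, so let me evaluate the sketch on its own terms.

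Your high-level framework is the right one: Almgren's isomorphism identifies a non-trivial $(n-2)$-parameter family in $\Zz_2(M;\Z)$ representing the fundamental class, the Almgren--Pitts min--max theorem converts any such sweepout into a stationary integral $2$-varifold of mass at most the width, and the job reduces to exhibiting a sweepout whose fibers have mass controlled by $\HF_1$. This is indeed the strategy of~\cite{nabutovsky2006curvature}. However, the proposal has a genuine gap at exactly the point where the constants $2D$ and $\tfrac{(n+1)!}{2}$ have to be produced: you never actually construct the map $\Phi\colon I^{n-2}\to\Zz_2(M;\Z)$. The phrases ``triangulate the parameter cube and use a geodesic-cone construction'' and ``a simplicial-counting argument then shows that each fiber can be written as an integer combination of at most $\tfrac{(n+1)!}{2}$ such elementary $2$-chains'' describe the desired output, not a construction. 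In particular, it is not explained how a cell of $I^{n-2}$ is to be associated a $2$-cycle in $M$, why each elementary piece has boundary consisting of two geodesic chords through the base point (rather than, say, a geodesic triangle contributing $3D$), or why the number of elementary pieces in any fiber is bounded by $\tfrac{(n+1)!}{2}$ independently of $M$. You explicitly acknowledge that these two points are ``the technical heart of Nabutovsky--Rotman's original argument,'' which is precisely where the proof is supposed to live. As written, the proposal establishes that \emph{some} bound of the form $A_{\min}(M)\le c(n)\,\HF_1(cD)$ is plausible via min--max plus filling, but it does not establish the stated inequality, and in particular it could not be used to recover the precise constants without doing the combinatorial work you have deferred.
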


Thus, in order to bound $A_{\min}(M)$ from above, it suffices to obtain a
affine upper bound for $\HF_1(l)$ in terms of the geometric data of $(M,g)$.
The rest of this section is devoted to establishing such a bound in the
Einstein setting.

\subsection{A homological filling inequality for Einstein manifolds}
\label{sec:filling_Einstein}

We now explain the geometric-combinatorial construction that leads to the
Einstein filling inequality. The starting point is the bubble--tree
decomposition of Theorem~\ref{thm:bubble_tree_Ein}, which decomposes
$(M^4,g)\in\Ee_1(4,v,D)$ into finitely many body regions $\Bb^i_j$ and neck
regions $\Nn^i_j$, with quantitative control on harmonic radius in the bodies
and on the topology and geometry of the necks.

The idea is to build:

\begin{itemize}
\item[(a)] a cover of $M$ by harmonic balls inside bodies and by ``trapezoids''
           inside necks, with uniformly controlled overlap,
\item[(b)] the nerve $\Nc$ of this cover and a canonical map $f:M\to\Nc$, and
\item[(c)] a geodesic graph $\Gamma\subset M$ whose vertices correspond to the
           elements of the cover and whose edges have uniformly bounded length.
\end{itemize}

A general $1$-cycle $C$ in $M$ will first be homologically replaced by a sum
of cycles supported in single bodies, then approximated by a cycle in
$\Gamma$, then straightened via the standard graph-to-nerve procedure of
\cite[Sec.~4]{LW_HF1} to a simplicial cycle in $\Nc$ and filled combinatorially
using Lemma~\ref{lem:comb-improved}.  Each $2$-simplex in $\Nc$ corresponds to
a geodesic triangle in $\Gamma$, which can be filled by a $2$-chain of
uniformly bounded area. This yields the desired affine filling inequality.

\smallskip\noindent
\emph{Bodies.}
For each body $\Bb^i_j$, let
\[
 r^i_j:=\inf_{x\in \Bb^i_j} r_h(x)\;\ge\; r_0(v,D)\,\diam(\Bb^i_j),
\]
where $r_h$ is the harmonic radius and $r_0(v,D)$ is the constant from
Theorem~\ref{thm:bubble_tree_Ein}. Set
\[
 R^i_j:=\frac{r^i_j}{64}.
\]
By volume comparison and the lower bound on $r^i_j$, one can cover $\Bb^i_j$
by at most $N_1(v,D)$ balls of radius $R^i_j$, and we denote a fixed such
subcover by $\Oo(\Bb^i_j)$.

\smallskip\noindent
\emph{Necks.}
For each neck
\(
 \Nn^i_j\simeq A_{\rho,2\rho}(0)\subset \R^4/\Gamma^i_j
\)
via a diffeomorphism $\Phi^i_j$, we cover the spherical factor
$S^3/\Gamma^i_j$ by $m_{i,j}\le N_2(v,D)$ convex balls $B_{r_c/4}(z_\ell)$
and define ``trapezoids''
\[
 T_{i,j,\ell}
 =\Phi^i_j\bigl((2\sqrt{1-\ep}\,\rho,(2-\sqrt{1-\ep})\,\rho)
               \times B_{r_c/4}(z_\ell)\bigr),
\]
contained in slightly larger convex regions
\[
 \bar T_{i,j,\ell}
 =\Phi^i_j\bigl((\rho/2,2\rho)\times B_{r_c}(z_\ell)\bigr),
\]
where $\ep>0$ and $r_c>0$ are chosen (as in~\cite{LW_HF1}) so that the
$\bar T_{i,j,\ell}$ are contained in the neck and have controlled overlap.

\smallskip
We then define the global cover
\[
 \Oo
 :=\Big(\bigcup_{i,j}\Oo(\Bb^i_j)\Big)
   \;\cup\;\Big(\bigcup_{i,j,\ell} T_{i,j,\ell}\Big).
\]
The number of sets in $\Oo$ is bounded by
\(
 |\Oo|\le \ti N_E(v,D)
\)
for some $\ti N_E(v,D)$ depending only on the bubble--tree complexity
$N_E(v,D)$, $k_E(v,D)$ and the covering constants $N_1(v,D)$, $N_2(v,D)$; see
also~\cite[Sec.~4]{LW_HF1} for the general Ricci-bounded case.

Let $\Nc$ be the nerve of $\Oo$, and let $f:M\to \Nc$ denote the canonical map
associated to a fixed partition of unity subordinate to $\Oo$.

\smallskip\noindent
\emph{Geodesic graph $\Gamma$.}
We now construct a geodesic graph $\Gamma\subset M$ whose vertices correspond
to the centers of the body balls in $\Oo(\Bb^i_j)$ and to chosen reference
points in the trapezoids $T_{i,j,\ell}$. Whenever two elements of $\Oo$
intersect, we connect the corresponding vertices in one of the following ways:
\begin{itemize}
\item[(i)] if both sets lie in the same body, we join the centers by a
           minimizing geodesic inside that body;
\item[(ii)] if both sets are trapezoids in the same neck, we join the reference
            points by a short curve contained in the union of the two
            trapezoids;
\item[(iii)] if one set is a body ball and the other is a trapezoid in a
            neighbouring neck, we concatenate a short geodesic segment inside
            the ball with a short curve inside the trapezoid.
\end{itemize}
By construction, each edge of $\Gamma$ has length bounded by
\[
 \le C(v,D)\,\max\{R^i_j,\rho\},
\]
for some constant $C(v,D)$ depending only on the geometric data.
This construction is identical to that in~\cite[Sec.~4]{LW_HF1}, with the
only difference that the scales $R^i_j$ and $\rho$ are now tied to the
harmonic-radius scale $r_0(v,D)$ furnished by Theorem~\ref{thm:bubble_tree_Ein}.

\subsection{Local fillings}

We next recall the local geometric fillings that will be used to fill the
small pieces of a $1$-cycle inside bodies and necks.

\begin{lemma}\label{lem:harm-Lip}
In a harmonic chart $\Phi:B_{r_h}(0)\to M$ with
\[
 \|g-\delta\|_{C^0}+r_h\|\partial g\|_{C^0}\le 10^{-3},
\]
any curve $\gamma$ satisfies
\[
 \sqrt{1-10^{-3}}\;\length(\gamma)
 \;\le\; \length(\Phi\gamma)
 \;\le\; \sqrt{1+2\cdot 10^{-3}}\;\length(\gamma),
\]
and $\Phi(B_{r_h}(0))\supset B_{r_h/2}(\Phi(0))$.
\end{lemma}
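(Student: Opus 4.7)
The plan is to push everything through the pullback metric $\ti g := \Phi^* g$, whose components in the harmonic chart agree with $g_{ij}$, and then to compare $\ti g$ pointwise with the Euclidean metric $\delta$. The two claims of the lemma decouple: the length comparison is a direct consequence of the $C^0$-closeness of $\ti g$ to $\delta$, while the inclusion $\Phi(B_{r_h}(0))\supset B_{r_h/2}(\Phi(0))$ follows by combining that length comparison with a ``no-short-escape'' argument.

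First I would establish the length comparison. The hypothesis $\|g-\delta\|_{C^0}\le 10^{-3}$ makes $\ti g$ at every point of $B_{r_h}(0)$ a small symmetric perturbation of $\delta$, and a standard eigenvalue estimate gives
\[
 (1-10^{-3})\,|v|_\delta^2 \;\le\; \ti g(v,v) \;\le\; (1+2\cdot 10^{-3})\,|v|_\delta^2
\]
for every tangent vector $v$; the mild asymmetry of the two constants simply absorbs the dimensional factor relating componentwise to operator-norm bounds on a $4\times 4$ symmetric matrix. For a rectifiable curve $\gamma\subset B_{r_h}(0)$ one has $\length(\Phi\circ\gamma)=\int\sqrt{\ti g(\dot\gamma,\dot\gamma)}\,dt$ and $\length(\gamma)=\int|\dot\gamma|_\delta\,dt$, so integrating the pointwise inequality immediately yields the two-sided length bound.

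Second, to prove $\Phi(B_{r_h}(0))\supset B_{r_h/2}(\Phi(0))$, I would argue by contradiction. Let $y\in B_{r_h/2}(\Phi(0))$ and choose a near-minimizing Riemannian curve $\sigma:[0,1]\to M$ from $\Phi(0)$ to $y$ with $\length_g(\sigma)<r_h/2$. If $\sigma$ exited $\Phi(B_{r_h}(0))$ for the first time at some $t_*\in(0,1]$, then $\Phi^{-1}(\sigma|_{[0,t_*)})$ would be a curve in $B_{r_h}(0)$ starting at $0$ and accumulating on $\partial B_{r_h}(0)$, so its Euclidean length would be at least $r_h$. Applying the length comparison already proved gives
\[
 \length_g(\sigma)\ \ge\ \length_g(\sigma|_{[0,t_*)})\ \ge\ \sqrt{1-10^{-3}}\,r_h\ >\ r_h/2,
\]
contradicting the choice of $\sigma$. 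Hence $\sigma$ remains inside $\Phi(B_{r_h}(0))$, and in particular $y\in\Phi(B_{r_h}(0))$.

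The argument is essentially routine; the one mild nuisance is keeping the asymmetric constants $\sqrt{1-10^{-3}}$ and $\sqrt{1+2\cdot 10^{-3}}$ consistent with whatever convention of $\|\cdot\|_{C^0}$ on symmetric $2$-tensors is in force, but every standard convention produces a bound of the form $1+O(10^{-3})$ that comfortably fits within the slack of the stated inequalities, so no genuinely hard step is involved.
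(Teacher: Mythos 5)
The paper states Lemma~\ref{lem:harm-Lip} without proof, so there is no ``paper's approach'' to compare against; I will just assess your argument on its own merits.

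Your two-step structure is the right one and is essentially complete. For the length comparison, reducing to a pointwise quadratic-form comparison between $\Phi^*g$ and $\delta$ and then integrating is exactly the standard argument. For the inclusion $\Phi(B_{r_h}(0))\supset B_{r_h/2}(\Phi(0))$, the ``no-short-escape'' contradiction is correct: if a Riemannian curve from $\Phi(0)$ of $g$-length $<r_h/2$ left the chart at a first time $t_*$, then the preimage of the restricted curve could not stay inside any compact $\bar B_\rho(0)$ with $\rho<r_h$ (else, by compactness, $\sigma(t_*)$ would itself lie in the open image $\Phi(B_{r_h}(0))$), so its Euclidean length is at least $r_h$, and the already-proved comparison forces $\length_g(\sigma)\ge\sqrt{1-10^{-3}}\,r_h>r_h/2$. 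This step silently uses that $\Phi$ is a diffeomorphism onto its (open) image, which is built into the definition of a harmonic chart, so that is fine.

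The one place I would push back is the remark that ``every standard convention produces a bound of the form $1+O(10^{-3})$ that comfortably fits within the slack of the stated inequalities.'' That is not quite true. If $\|g-\delta\|_{C^0}$ is interpreted as $\sup_x\max_{i,j}|g_{ij}(x)-\delta_{ij}|$, then the operator norm of the $4\times4$ symmetric perturbation can be as large as $4\cdot10^{-3}$ (take all entries equal), which would give a factor $\sqrt{1+4\cdot10^{-3}}$ that does \emph{not} fit inside the stated $\sqrt{1+2\cdot10^{-3}}$. The stated constants only come out cleanly if one reads $\|g-\delta\|_{C^0}$ as the operator (equivalently, eigenvalue) norm of the matrix $g_{ij}-\delta_{ij}$, which is the usual convention in the harmonic-radius literature (Anderson's definition). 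With that reading the pointwise bound is $(1\pm10^{-3})$, and the asymmetric constants $\sqrt{1-10^{-3}}$, $\sqrt{1+2\cdot10^{-3}}$ indeed leave spare room. You should name that convention explicitly rather than asserting that the details do not matter, since otherwise the stated inequality is false under the entrywise reading. Finally, note that the $r_h\|\partial g\|_{C^0}$ part of the hypothesis is not needed for either conclusion; your proof correctly ignores it.
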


\begin{lemma}\label{lem:fill-ball}
Let $r_h(x)>0$ and let $C$ be a $1$-cycle whose image is contained in
$B_{r_h(x)/2}(x)$. Then there exists a $2$-chain $E$ with $\partial E=C$ and
\[
 \mass_2(E)\;\le\; r_h(x)\,\mass_1(C).
\]
\end{lemma}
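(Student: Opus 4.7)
The plan is to reduce to a nearly Euclidean filling problem using the harmonic chart at $x$ and then fill by coning to the origin.

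First, invoke the harmonic chart $\Phi:B_{r_h(x)}(0)\subset\R^4\to M$ coming from the definition of the harmonic radius, so that $\Phi(0)=x$ and $\|g-\delta\|_{C^0}+r_h(x)\|\partial g\|_{C^0}\le 10^{-3}$. Lemma~\ref{lem:harm-Lip} ensures $\Phi(B_{r_h(x)}(0))\supset B_{r_h(x)/2}(x)$, so the image of $C$ lies in the range of $\Phi$ and we can form the pullback $\widetilde C:=\Phi^{-1}_*C$, which is a Lipschitz $1$-cycle in $B_{r_h(x)}(0)\subset\R^4$ with
\[
 \mass_1(\widetilde C)\ \le\ (1-10^{-3})^{-1/2}\,\mass_1(C).
\]

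Second, fill $\widetilde C$ in $\R^4$ by the singular cone to the origin. For each Lipschitz $1$-simplex $f_i:\Delta^1\to B_{r_h(x)}(0)$ in the expansion $\widetilde C=\sum_i a_if_i$, define
\[
 \widetilde E_i:\Delta^2\to\R^4,\qquad (t_0,t_1,t_2)\mapsto (1-t_0)\,f_i\!\left(\tfrac{t_2}{t_1+t_2}\right),
\]
and set $\widetilde E:=\sum_i a_i\widetilde E_i$. The classical chain identity $\partial(0*\widetilde C)=\widetilde C-0*\partial\widetilde C$ together with $\partial\widetilde C=0$ gives $\partial\widetilde E=\widetilde C$. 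Changing variables to $(u,s)\in[0,1]^2$ with $u=t_0$ and $s=t_2/(1-t_0)$, a direct Jacobian computation yields
\[
 |\partial_u\widetilde E_i\wedge\partial_s\widetilde E_i|=(1-u)\,|f_i(s)\wedge f_i'(s)|\ \le\ (1-u)\,|f_i(s)|\,|f_i'(s)|,
\]
and integration gives $\Area(\widetilde E_i)\le\tfrac12\max_s|f_i(s)|\,\length(f_i)\le\tfrac12 r_h(x)\length(f_i)$, so
\[
 \mass_2(\widetilde E)\ \le\ \tfrac12\,r_h(x)\,\mass_1(\widetilde C).
\]

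Third, push forward and set $E:=\Phi_*\widetilde E$, so $\partial E=\Phi_*\widetilde C=C$. Since $\|g-\delta\|_{C^0}\le 10^{-3}$ bounds the Jacobian distortion of $2$-chains in $\Phi$-coordinates by a universal factor $(1+c\cdot 10^{-3})$, combining the three estimates gives
\[
 \mass_2(E)\ \le\ (1+c\cdot 10^{-3})(1-10^{-3})^{-1/2}\,\tfrac12\,r_h(x)\,\mass_1(C)
 \ \le\ r_h(x)\,\mass_1(C),
\]
with the slack factor of $2$ comfortably absorbing the near-unity distortion constants. The only step requiring genuine care is the chain-level identity $\partial\widetilde E=\widetilde C$ with the chosen singular simplicial parametrization of the cone; this is standard but mildly fiddly, while the Euclidean area bound on each cone simplex is immediate from the Jacobian calculation above.
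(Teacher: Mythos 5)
Your proposal is correct and follows exactly the paper's argument: pull back to harmonic coordinates via $\Phi$, cone to the origin to obtain a Euclidean $2$-chain of area at most $\tfrac12 r_h(x)\,\mass_1(\Phi^{-1}_*C)$, and push forward, absorbing the near-unity bilipschitz distortion from Lemma~\ref{lem:harm-Lip} into the slack factor of $2$. The only difference is that you make the cone parametrization and the Jacobian/Hadamard estimate explicit, which the paper leaves to the reader.
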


\begin{proof}
In harmonic coordinates $\Phi:B_{r_h}\to M$, the metric is
$(1\pm 10^{-3})$-bilipschitz to the Euclidean metric by
Lemma~\ref{lem:harm-Lip}. Coning $\Phi^{-1}(C)$ to $0$ in $\R^4$ produces a
$2$-chain of Euclidean area $\le \frac12 r_h\,\length(\Phi^{-1}C)$.
Pushing forward via $\Phi$ distorts area by at most a fixed factor close to
$1$, which we absorb into the constant.
\end{proof}

\begin{lemma}[Neck contraction]\label{lem:neck-contraction}
Let $\Nn\simeq A_{\rho,2\rho}(0)\subset\R^4/\Gamma$ be a neck region in the
bubble--tree decomposition, and let $C$ be a $1$-cycle with image in $\Nn$.
Then there exist a $1$-cycle $C'$ supported in $\Nn\cap \Bb$ (the adjacent
child body) and a $2$-chain $E$ such that
\[
 \partial E=C-C',\ 
 \mass_1(C')\le h_E(v,D)\,\diam(\Bb),\ 
 \mass_2(E)\le C(v,D)\,\rho\,\mass_1(C),
\]
where $h_E(v,D)$ depends only on the uniform bound $C_\Gamma(v,D)$ from
Theorem~\ref{thm:bubble_tree_Ein}.
\end{lemma}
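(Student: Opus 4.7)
The idea is to contract $C$ radially through the neck onto its inner boundary sphere, producing a homotopy $2$-chain of area proportional to $\rho\,\mass_1(C)$, and then to replace the resulting cycle by a short canonical representative using that $H_1(S^3/\Gamma^i_j;\Z)$ is finite of bounded order. First, pull $C$ back to the Euclidean annular model $A_{\rho,2\rho}(0)/\Gamma^i_j\subset\R^4/\Gamma^i_j$ via $\Phi^i_j$; by Lemma~\ref{lem:harm-Lip} together with Theorem~\ref{thm:eps_reg_Einstein}, the neck is covered by harmonic charts of bi-Lipschitz distortion close to $1$, so all Euclidean estimates below transfer back to $\Nn^i_j$ at the cost of a universal constant absorbed into $C(v,D)$.

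On the annulus, define the straight-line radial homotopy $H(s,r\omega)=((1-s)r+s\rho)\omega$, which collapses $A_{\rho,2\rho}/\Gamma^i_j$ onto the inner sphere $\{\rho\}\times S^3/\Gamma^i_j\subset \Nn^i_j\cap\Bb$. For a curve $\gamma(t)=r(t)\omega(t)$ the pointwise Jacobian satisfies $|\partial_s H\wedge\partial_t H|\le 2\rho^2|\omega'(t)|$, and combined with $\rho\,|\omega'(t)|\le|\gamma'(t)|$ this integrates to give a swept $2$-chain $\hat E$ with $\mass_2(\hat E)\le 2\rho\,\mass_1(C)$ and boundary $C-\hat C$, where $\hat C$ lies on the inner sphere and satisfies $\mass_1(\hat C)\le\mass_1(C)$. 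By Theorem~\ref{thm:bubble_tree_Ein}(ii) we have $|\Gamma^i_j|\le C_\Gamma(\mb A(v,D))$, so $H_1(S^3/\Gamma^i_j;\Z)=(\Gamma^i_j)^{\mathrm{ab}}$ is a finite group of bounded order; fix once and for all a basis of short geodesic loops $\gamma_1,\ldots,\gamma_k$ generating $H_1$, each of length at most $\pi\rho$. Writing $[\hat C]=\sum_i n_i[\gamma_i]$ with $0\le n_i<|\Gamma^i_j|$ and setting $C':=\sum_i n_i\gamma_i$, we obtain a cycle supported in $\Nn^i_j\cap\Bb$ with $\mass_1(C')\le k\,|\Gamma^i_j|\,\pi\rho\le h_E(v,D)\,\diam(\Bb)$, using the bubble--tree scale matching $\rho\le C\,\diam(\Bb)$.

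Finally, fill the homologically trivial cycle $\hat C-C'$ inside $S^3/\Gamma^i_j$ by lifting to $S^3$, coning to a pole, and projecting under the $|\Gamma^i_j|$-to-one quotient map; this produces a $2$-chain $E''$ with $\partial E''=\hat C-C'$ and $\mass_2(E'')\le C\rho(\mass_1(\hat C)+\mass_1(C'))$. Taking $E:=\hat E+E''$ gives $\partial E=C-C'$, and combining the mass bounds yields the claimed linear area estimate, with the bounded additive term from $\mass_1(C')$ absorbed into the constant $C(v,D)$. The main obstacle will be ensuring that the isoperimetric constant for $2$-fillings on $S^3/\Gamma^i_j$ is controlled purely by $|\Gamma^i_j|$, and hence ultimately by $\mb A(v,D)$ and $C_S(v,D)$; this is precisely where the explicit lift to the round $S^3$ and the sharp cone-to-pole estimate are needed, as they prevent any implicit dependence on the particular spherical space form appearing in the neck.
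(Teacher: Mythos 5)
Your overall strategy — push $C$ radially onto a sphere cross-section of the neck, sweeping a $2$-chain of area $\lesssim\rho\,\mass_1(C)$, then replace the projected $1$-cycle by a short representative of its class in the finite group $H_1(S^3/\Gamma;\Z)$ — is exactly the paper's argument (and that of \cite[Sec.~4.2]{LW_HF1}), and the Jacobian computation for the radial homotopy is correct.

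The gap is in your ``lift, cone, project'' step for filling $\hat C-C'$ in $S^3/\Gamma$. A null-homologous $1$-cycle in $S^3/\Gamma$ need not \emph{lift} to a $1$-cycle in $S^3$: a constituent loop of $\hat C-C'$ can be nontrivial in $\pi_1(S^3/\Gamma)=\Gamma$ even though the total cycle is zero in $H_1=\Gamma^{\mathrm{ab}}$, in which case it lifts only to a path, not a loop. What does exist is the transfer (preimage) of $\hat C-C'$ to a $1$-cycle $\tilde Z$ in $S^3$, but coning $\tilde Z$ to a pole and pushing forward by the $|\Gamma|$-to-one covering map $p$ yields a $2$-chain whose boundary is $p_*\tilde Z=|\Gamma|(\hat C-C')$, not $\hat C-C'$. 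You cannot divide an integer $2$-chain by $|\Gamma|$, so as written the construction produces a filling of the wrong cycle. To repair this one has to argue directly in $S^3/\Gamma$, for instance by exploiting that the systole of $S^3/\Gamma$ at scale $\rho$ is $\gtrsim\rho/|\Gamma|$: if $\mass_1(\hat C)$ is below that threshold the cycle is null-homotopic and fillable inside a convex ball, and otherwise the additive term $h_E\rho^2$ from filling $\hat C-C'$ can genuinely be absorbed into $\rho\,\mass_1(C)$. Relatedly, your closing remark that ``the bounded additive term \dots [is] absorbed into $C(v,D)$'' is not justified without such a dichotomy, since the target bound $\mass_2(E)\le C(v,D)\rho\,\mass_1(C)$ is purely multiplicative and must degenerate to zero with $\mass_1(C)$.
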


\begin{proof}
The argument is exactly as in~\cite[Sec.~4.2]{LW_HF1}.  Via $\Phi^{-1}$ we
view $C$ as a $1$-cycle in the annulus $A_{\rho,2\rho}(0)\subset\R^4/\Gamma$.
Radial projection onto a sphere $S_\lambda/\Gamma$ with
$\lambda\in(\rho,2\rho)$ sweeps out a $2$-chain of area $\lesssim
\rho\,\mass_1(C)$. The projected cycle is a $1$-cycle in $S^3/\Gamma$, whose
first homology is finite with generators represented by loops of length
$\le 2\pi\lambda$. Replacing the projected cycle by a representative of its
homology class supported in a bounded number of such generators yields $C'$
with $\mass_1(C')\le h_E(v,D)\,\lambda$.  Pushing forward by $\Phi$ and
controlling the distortion inside the annulus completes the proof.
\end{proof}

\subsection{Approximation by $\Gamma$ and improved combinatorial filling}

The next step is to replace a $1$-cycle lying in a body region by a combinatorial
cycle in the geodesic graph $\Gamma$.

\begin{lemma}\label{lem:cycle-to-graph}
Let $C$ be a $1$-cycle supported in a body region $\Bb$. Then there exists
a $1$-cycle $C'\subset \Gamma$ and a $2$-chain $E$ such that
\[
 \partial E=C-C',\ 
 \mass_1(C')\le C_\Gamma(v,D)\,\mass_1(C),\ 
 \mass_2(E)\le C_\Gamma(v,D)\,\mass_1(C).
\]
\end{lemma}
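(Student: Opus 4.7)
The plan is to discretize $C$ at the harmonic scale $R:=R^i_j$, snap each subdivision point to a nearby vertex of $\Gamma$, and fill the resulting geodesic quadrilaterals using the harmonic-ball filling Lemma~\ref{lem:fill-ball}. First, parametrize $C$ by arc length and insert break-points $p_0,p_1,\dots,p_N=p_0$ so that each consecutive arc $c_k$ from $p_k$ to $p_{k+1}$ has length at most $R$; this is achievable with $N\le \mass_1(C)/R+1$ break-points. In particular $c_k\subset B_{2R}(p_k)$. For each $p_k$ choose a cover element $B_R(x_{\alpha_k})\in\Oo(\Bb)$ containing $p_k$, and let $\sigma_k$ be a minimizing geodesic from $p_k$ to the center $x_{\alpha_k}$; since $R=r^i_j/64$ and $r_h(x_{\alpha_k})\ge r^i_j$, this geodesic lies well inside the harmonic ball at $x_{\alpha_k}$ and has length at most $R$.

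Next I would build the combinatorial cycle $C'\subset\Gamma$. Because $c_k$ meets both $B_R(x_{\alpha_k})$ and $B_R(x_{\alpha_{k+1}})$, these two balls lie at mutual distance at most $C(v,D)R$, and hence by the construction of the geodesic graph in Section~\ref{sec:filling_Einstein} there is an edge $e_k$ of $\Gamma$ joining $x_{\alpha_k}$ to $x_{\alpha_{k+1}}$ whose length is at most $C(v,D)R$. Define
\[
 C':=\sum_k e_k,
\]
which is a closed $1$-cycle in $\Gamma$ because the snapping cancels at every intermediate vertex. The mass bound follows from the edge-length estimate and $N\le \mass_1(C)/R+1$:
\[
 \mass_1(C')\le N\cdot C(v,D)R\le C_\Gamma(v,D)\,\mass_1(C),
\]
after absorbing the additive term using $\mass_1(C)\ge$ a lower scale or else by allowing the resulting $C'$ to be a constant cycle when $C$ is very short.

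For the filling $2$-chain, observe that the loop $\partial Q_k:=c_k+\sigma_{k+1}-e_k-\sigma_k$ has perimeter at most $(4+C(v,D))R$, and because $R=r^i_j/64$ all four sides of $Q_k$ lie inside a single harmonic ball $B_{r_h(x_{\alpha_k})/2}(x_{\alpha_k})$ (here I use Lemma~\ref{lem:harm-Lip} to see that the image of a short loop in such a ball under $\Phi^{-1}$ remains in a Euclidean ball of comparable radius). Lemma~\ref{lem:fill-ball} then supplies a $2$-chain $E_k$ with $\partial E_k=\partial Q_k$ and $\mass_2(E_k)\le r_h(x_{\alpha_k})\cdot\mass_1(\partial Q_k)$. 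Summing over $k$ and using $r_h(x_{\alpha_k})\le D$ and $\sum_k\mass_1(\partial Q_k)\le C(v,D)\mass_1(C)$ yields $\mass_2(E)\le C_\Gamma(v,D)\,\mass_1(C)$ with $E:=\sum_k E_k$, and by construction $\partial E=C-C'$.

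The main obstacle is verifying the compatibility in the snapping step: one needs each pair $(x_{\alpha_k},x_{\alpha_{k+1}})$ associated to an arc $c_k$ to correspond to an actual edge of $\Gamma$ of length $O(R)$, and one needs the entire quadrilateral $Q_k$ to fit inside one harmonic chart so that Lemma~\ref{lem:fill-ball} applies. Both requirements follow from the choice $R=r^i_j/64$ together with the adjacency conventions used to define $\Gamma$ in case~(i) of Section~\ref{sec:filling_Einstein}, but the verification is combinatorially delicate when many balls of $\Oo(\Bb)$ overlap; one handles this exactly as in~\cite[Sec.~4]{LW_HF1}, where the Vitali-type structure of the body cover controls how many balls meet any given point and produces a uniform bound $C_\Gamma(v,D)$ depending only on the covering multiplicity $N_1(v,D)$ and the diameter $D$.
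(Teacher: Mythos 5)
Your approach is essentially the same as the paper's: subdivide $C$ at the harmonic scale $R=R^i_j$, snap break-points to centers of the cover $\Oo(\Bb)$, replace each small arc by the corresponding edge of $\Gamma$, and fill the resulting quadrilateral $Q_k$ inside a single harmonic ball via Lemma~\ref{lem:fill-ball}.

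One small but genuine technical issue in the way you set up the subdivision. You cut $C$ by arc length into pieces $c_k$ of length $\le R$ and then, for each break-point $p_k$, choose \emph{any} covering ball $B_R(x_{\alpha_k})\ni p_k$. With that choice you only get $d(x_{\alpha_k},x_{\alpha_{k+1}})\le 3R$, which does not force $B_R(x_{\alpha_k})\cap B_R(x_{\alpha_{k+1}})\neq\emptyset$; yet the existence of the edge $e_k$ in $\Gamma$ is governed by intersection of cover elements (the nerve adjacency), not by a distance bound on the centers. So the asserted edge $e_k$ need not exist. The clean fix, and what the cited proof in~\cite{LW_HF1} actually does, is to partition $C$ so that each arc $c_k$ is \emph{contained in a single ball} of the cover $\Oo(\Bb)$ (using the Lebesgue number of the finite cover, which is bounded below at the scale $R$ by the construction of $\Oo(\Bb)$). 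Then the shared endpoint $p_{k+1}$ of $c_k$ and $c_{k+1}$ automatically lies in $B_R(x_{\alpha_k})\cap B_R(x_{\alpha_{k+1}})$, so the two balls intersect and the edge exists by the definition of $\Gamma$. With that modification, the quadrilateral $Q_k$ also automatically sits inside a harmonic ball around $x_{\alpha_k}$ since $R=r^i_j/64$, which is the point you were worried about. The rest of your estimates---counting $N\le \mass_1(C)/R+1$, bounding edge lengths by $C(v,D)R$, summing the fill areas using $\mass_2(E_k)\le r_h\cdot\mass_1(\partial Q_k)$, and disposing of the additive term by allowing $C'$ to be trivial when $C$ is shorter than $R$---are correct and match the paper's argument.
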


\begin{proof}
The proof is identical to~\cite[Lemma~4.5]{LW_HF1}. One partitions $C$ into
short arcs contained in the balls of the cover $\Oo(\Bb)$, with length
$\le R^i_j$. Inside each ball one joins the endpoints of an arc to the
corresponding center along edges of $\Gamma$. The difference between the
original arc and the resulting path in $\Gamma$ lies in a harmonic ball and
hence can be filled using Lemma~\ref{lem:fill-ball}. Summing over all arcs
gives the desired estimates.
\end{proof}

The idea of controlling the combinatorial filling via integer solutions of linear systems was introduced in~\cite{nabutovsky2006curvature}. We show that one can obtain an upper bound for the number of simplices needed to fill a simplicial cycle which depends linearly on the size of the cycle. Note that this is independent of the dimension of the simplicial complex.

Let $A\in\mathbb{Z}^{\ell\times n}$, $b\in\mathbb{Z}^\ell$, and assume
$\mathrm{rank}(A)=m\le \ell$. Set
\[
 M_A := \max_{1\le i\le \ell,\;1\le j\le n} |a_{ij}|,\qquad
 M_b := \max_{1\le i\le \ell} |b_i|.
\]
Let $Y$ be the maximum of the absolute values of all $m\times m$ minors of the
augmented matrix $(A\,|\,b)$:
\[
 Y := \max\bigl\{\,|\det S|:\ S\text{ is an $m\times m$ submatrix of }(A\,|\,b)\,\bigr\}.
\]

\begin{theorem}\label{thm:BFRT-Hadamard}
Assume that the system $Ax=b$ has an integer solution. Then there exists an
integer solution $x=(x_i)\in\mathbb{Z}^n$ such that
\begin{equation}\label{eq:BFRT-Hadamard-bound}
 \max_i |x_i|
 \;\le\; m^{m/2}\,\max\{\,M_A^m,\;M_A^{m-1}M_b\,\}
 \;=\; m^{m/2} M_A^{m-1}\max\{M_A,M_b\}.
\end{equation}
In particular, this upper bound depends only on $m$, $M_A$ and $M_b$
(and is independent of $n$).
\end{theorem}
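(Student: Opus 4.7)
The proof is a direct combination of two classical ingredients: the Borosh--Flahive--Rubin--Treybig (BFRT) theorem on the existence of small integer solutions to linear Diophantine systems, and Hadamard's inequality for determinants. The plan is first to invoke BFRT to obtain an integer solution controlled by the quantity $Y$ appearing in the statement, and then to bound $Y$ itself in terms of $m$, $M_A$ and $M_b$.

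First I would apply the BFRT theorem \cite{BFRT1989} directly to the system $Ax=b$. Under the standing assumption that the system is consistent over $\mathbb{Z}$ and that $\mathrm{rank}(A)=m$, BFRT guarantees the existence of an integer solution $x\in\mathbb{Z}^n$ satisfying
\[
 \max_i |x_i|\;\le\;Y,
\]
where $Y$ is precisely the maximum absolute value of an $m\times m$ minor of the augmented matrix $(A\,|\,b)$. No preliminary reduction to full row rank is needed, since the size $m$ of the minors is already the rank. This reduces the problem to a purely linear-algebraic estimate on $Y$, independent of the number of variables $n$.

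Next I would estimate each $m\times m$ minor of $(A\,|\,b)$ via Hadamard's inequality. Let $S$ be such a submatrix, with columns $s_1,\dots,s_m\in\mathbb{Z}^m$. Each $s_j$ is either restricted from a column of $A$, in which case $\max_k|s_{kj}|\le M_A$, or restricted from the single right-hand side column $b$, in which case $\max_k|s_{kj}|\le M_b$. Hadamard's inequality then gives
\[
 |\det S|\;\le\;\prod_{j=1}^m \|s_j\|_2
 \;\le\;\prod_{j=1}^m \sqrt{m}\,\max_k|s_{kj}|
 \;=\;m^{m/2}\prod_{j=1}^m \max_k|s_{kj}|.
\]
Because $(A\,|\,b)$ contains only one copy of $b$, the submatrix $S$ contains either zero or exactly one column coming from $b$. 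In the first case $|\det S|\le m^{m/2}M_A^m$, and in the second $|\det S|\le m^{m/2}M_A^{m-1}M_b$. Taking the maximum over all $S$ yields $Y\le m^{m/2}M_A^{m-1}\max\{M_A,M_b\}$, which combined with the BFRT inequality gives \eqref{eq:BFRT-Hadamard-bound}.

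I do not anticipate a serious obstacle: BFRT is invoked as a black box, and the remaining content is the Hadamard estimate. The only point requiring genuine care is the case split in the last step, which is what ensures that $M_b$ enters with exponent $1$ rather than $m$, reflecting the fact that $b$ contributes only a single column to the augmented matrix.
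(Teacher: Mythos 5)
Your proposal is correct and takes essentially the same approach as the paper: invoke BFRT to reduce to bounding the maximal $m\times m$ minor $Y$ of $(A\,|\,b)$, then apply Hadamard's inequality with a case split on whether the chosen submatrix contains the column $b$. The reasoning and the two-case bound $m^{m/2}M_A^{m-1}\max\{M_A,M_b\}$ match the paper's argument exactly.
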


\begin{proof}
By \cite[Thm.~1]{BFRT1989}, if $Ax=b$ has an integer solution then there exists
an integer solution $x$ with
\begin{equation}\label{eq:BFRT-Y}
 \max_i |x_i|\;\le\; Y,
\end{equation}
where $Y$ is the maximum of the absolute values of the $m\times m$ minors of
$(A\,|\,b)$ defined above.

We estimate $Y$ using Hadamard's inequality. Let $S$ be any $m\times m$
submatrix of $(A\,|\,b)$ and denote its columns by
$s_1,\dots,s_m\in\mathbb{R}^m$. Hadamard's inequality gives
\begin{equation}\label{eq:hadamard}
 |\det S|\;\le\;\prod_{k=1}^m \|s_k\|_2.
\end{equation}
There are two cases.

\smallskip\noindent
\emph{Case 1: $S$ is formed entirely by columns of $A$.}
Then every entry of $S$ has absolute value at most $M_A$, hence each column
has Euclidean norm
\(
 \|s_k\|_2 \le \sqrt{m}\,M_A.
\)
By~\eqref{eq:hadamard},
\[
 |\det S|
 \;\le\;(\sqrt{m}M_A)^m
 \;=\;m^{m/2}M_A^m.
\]

\smallskip\noindent
\emph{Case 2: $S$ contains the column $b$.}
Then one column of $S$ (the one coming from $b$) has norm at most
$\sqrt{m}\,M_b$, while each of the remaining $m-1$ columns has norm at most
$\sqrt{m}\,M_A$. Thus
\[
 |\det S|
 \;\le\;(\sqrt{m}M_b)\,(\sqrt{m}M_A)^{m-1}
 \;=\;m^{m/2}M_A^{m-1}M_b.
\]

Taking the maximum over all $S$ in these two cases yields
\[
 Y
 \;\le\; m^{m/2}\,\max\{M_A^m,\;M_A^{m-1}M_b\}.
\]
Combining this with~\eqref{eq:BFRT-Y} gives
\eqref{eq:BFRT-Hadamard-bound}.
\end{proof}

\begin{lemma}\label{lem:comb-improved}
Let $X$ be an $n$-dimensional simplicial complex with $n_0$ vertices.
Let
\[
 c_k = \sum_i a_k^i\,\sigma_i^k
\]
be a simplicial $k$-boundary in $X$ with integer coefficients.
Then there exists a simplicial $(k+1)$-chain
\[
 c_{k+1} = \sum_i a_{k+1}^i\,\sigma_i^{k+1}
\]
such that $\partial c_{k+1}=c_k$ and
\begin{equation}\label{eq:comb-max-coeff}
 \max_i |a_{k+1}^i|
 \;\le\; \binom{n_0}{k+1}^{\frac12\binom{n_0}{k+1}}\,
          \max_i |a_k^i|.
\end{equation}
In particular, since the number $n_{k+1}$ of $(k+1)$-simplices satisfies
$n_{k+1}\le \binom{n_0}{k+2}$, we also have
\begin{equation}\label{eq:comb-l1-bound}
 \sum_i |a_{k+1}^i|
 \;\le\; \binom{n_0}{k+2}\,\binom{n_0}{k+1}^{\frac12\binom{n_0}{k+1}}\,
          \max_i |a_k^i|.
\end{equation}
\end{lemma}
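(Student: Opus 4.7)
The plan is to translate the filling problem into an integer linear system and apply Theorem~\ref{thm:BFRT-Hadamard}. After fixing an ordering of the $k$- and $(k{+}1)$-simplices of $X$, I would write the simplicial boundary map $\partial_{k+1}\colon C_{k+1}(X;\Z)\to C_k(X;\Z)$ as a matrix $A\in\Z^{n_k\times n_{k+1}}$, where $n_k$ and $n_{k+1}$ denote the numbers of $k$- and $(k{+}1)$-simplices of $X$, respectively. Each column of $A$ corresponds to a $(k{+}1)$-simplex $\sigma$ and records the signed coefficients of its codimension-one faces in $\partial\sigma$, so every entry of $A$ lies in $\{-1,0,1\}$ and $M_A=1$. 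Writing $b\in\Z^{n_k}$ for the coefficient vector of $c_k$, the hypothesis that $c_k$ is a boundary is equivalent to the solvability of the Diophantine system $Ax=b$, and by construction $M_b=\max_i|a_k^i|$.

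Next, I would apply Theorem~\ref{thm:BFRT-Hadamard} to the pair $(A,b)$. Setting $m:=\rank(A)$, the theorem produces an integer solution $x\in\Z^{n_{k+1}}$ with
\[
 \max_i|x_i|\ \le\ m^{m/2}\,M_A^{m-1}\,\max\{M_A,M_b\}\ =\ m^{m/2}\,\max\{1,M_b\}.
\]
If $c_k=0$ the lemma is trivial, so we may assume $M_b\ge 1$ and the right-hand side reduces to $m^{m/2}M_b$. I would then read off the coefficients of $x$ as those of the desired chain $c_{k+1}$, giving the inequality $\max_i|a_{k+1}^i|\le m^{m/2}\max_i|a_k^i|$.

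To upgrade this to the purely combinatorial bound in \eqref{eq:comb-max-coeff}, I would observe that $m\le n_k\le\binom{n_0}{k+1}$ (since any $k$-simplex in $X$ is determined by a $(k{+}1)$-element subset of the vertex set), and that the function $t\mapsto t^{t/2}$ is nondecreasing on $[1,\infty)$ (its logarithm $\tfrac{t}{2}\log t$ has derivative $\tfrac12\log t+\tfrac12\ge 0$ for $t\ge 1$). Consequently
\[
 m^{m/2}\ \le\ \binom{n_0}{k+1}^{\frac12\binom{n_0}{k+1}},
\]
which yields \eqref{eq:comb-max-coeff}. The $\ell^1$ estimate \eqref{eq:comb-l1-bound} is then immediate from $\sum_i|a_{k+1}^i|\le n_{k+1}\max_i|a_{k+1}^i|$ together with the crude count $n_{k+1}\le\binom{n_0}{k+2}$.

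The argument is essentially bookkeeping once Theorem~\ref{thm:BFRT-Hadamard} is in hand; there is no genuine analytic or combinatorial obstacle. The only point requiring a moment of care is the passage from the rank-dependent bound $m^{m/2}$ to the vertex-count bound $\binom{n_0}{k+1}^{\frac12\binom{n_0}{k+1}}$, which is why I would explicitly record the monotonicity of $t\mapsto t^{t/2}$ above rather than leave it implicit. The fact that the exponent $M_A^{m-1}$ equals $1$ for simplicial boundary matrices is what ultimately removes any dependence on the dimension and on $n$ beyond the vertex count $n_0$, which is the feature the lemma is designed to exploit.
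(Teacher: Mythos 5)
Your proof is correct and follows essentially the same route as the paper: encode $\partial_{k+1}$ as a $\{0,\pm1\}$-matrix so that $M_A=1$, apply Theorem~\ref{thm:BFRT-Hadamard} to the solvable system $Ax=b$ to get a solution bounded by $m^{m/2}M_b$, and then pass from the rank $m\le n_k\le\binom{n_0}{k+1}$ to the vertex-count bound. The one thing you add that the paper leaves implicit is the explicit verification that $t\mapsto t^{t/2}$ is nondecreasing on $[1,\infty)$, which is a small but genuine tightening of the exposition.
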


\begin{proof}
If $c_k=0$, the conclusion is trivial with $c_{k+1}=0$, so we assume
$c_k\neq 0$.  Let $C_j$ denote the group of $j$-chains of $X$ with integer
coefficients. We fix the usual basis of $C_j$ given by all oriented
$j$-simplices $\sigma^j_1,\dots,\sigma^j_{n_j}$, where $n_j$ is the number of
$j$-simplices of $X$.

Consider the boundary map
\[
 \partial_{k+1}\colon C_{k+1} \to C_k
\]
and write its matrix with respect to these bases as
\(
 A\in\mathbb{Z}^{n_k\times n_{k+1}}.
\)
By construction, each entry of $A$ lies in $\{0,\pm1\}$, so
\[
 M_A := \max_{i,j}|a_{ij}| = 1.
\]
Write $c_k$ in coordinates as
\(
 c_k = \sum_i a_k^i\sigma_i^k,
\)
and let $b\in\mathbb{Z}^{n_k}$ be the column vector with entries
$b_i = a_k^i$. Then the condition $\partial c_{k+1}=c_k$ is exactly the
linear system
\[
 A x = b,
\]
where $x\in\mathbb{Z}^{n_{k+1}}$ is the coordinate vector of $c_{k+1}$.
Since $c_k$ is a boundary, this system has at least one integer solution.
Set
\[
 M_b := \max_i |b_i| = \max_i |a_k^i|\ (\ge 1).
\]

Let $m:=\mathrm{rank}(A)\le n_k$. Applying
Theorem~\ref{thm:BFRT-Hadamard} with this $A$ and $b$, and using $M_A=1$,
we obtain an integer solution $x$ of $Ax=b$ such that
\begin{equation}\label{eq:max-ai+1}
 \max_i |x_i|
 \;\le\; m^{m/2}\,\max\{M_A,\;M_b\}
 \;=\; m^{m/2}\,\max_i |a_k^i|.
\end{equation}
Define $c_{k+1}:=\sum_i x_i \sigma_i^{k+1}$; then $\partial c_{k+1}=c_k$, and
the coefficients $a_{k+1}^i$ of $c_{k+1}$ are exactly the $x_i$.

Next we bound $m$ in terms of the number of vertices $n_0$. Any $k$-simplex
of $X$ is determined by its $(k+1)$ vertices, so the number of $k$-simplices
satisfies
\[
 n_k \le \binom{n_0}{k+1}.
\]
Hence
\[
 m = \mathrm{rank}(A) \le n_k \le \binom{n_0}{k+1}.
\]
Combining this with~\eqref{eq:max-ai+1} yields
\[
 \max_i |a_{k+1}^i|
 = \max_i |x_i|
 \;\le\; m^{m/2}\,\max_i |a_k^i|
 \;\le\; \binom{n_0}{k+1}^{\frac12\binom{n_0}{k+1}}\max_i |a_k^i|,
\]
which is exactly~\eqref{eq:comb-max-coeff}. Finally, since there are at most
$n_{k+1}\le \binom{n_0}{k+2}$ nonzero coefficients $a_{k+1}^i$, we also have
\[
 \sum_i |a_{k+1}^i|
 \;\le\; n_{k+1}\,\max_i |a_{k+1}^i|
 \;\le\; \binom{n_0}{k+2}\,\binom{n_0}{k+1}^{\frac12\binom{n_0}{k+1}}\,
          \max_i |a_k^i|,
\]
which is~\eqref{eq:comb-l1-bound}.
\end{proof}

In our application we will only use Lemma~\ref{lem:comb-improved} for $k=1$
and for the simplicial complex $X=\Nc$ (the nerve of $\Oo$).

\begin{lemma}[Filling geodesic triangles]\label{lem:triangles}
There exists a constant $K_9(v,D)>0$ such that any geodesic triangle formed
by three edges of $\Gamma$ bounds a $2$-chain $E$ with
\[
 \mass_2(E)\le K_9(v,D).
\]
\end{lemma}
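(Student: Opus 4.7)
The plan is to combine the uniform bound on edge lengths of $\Gamma$ with the local fillings of Lemmas~\ref{lem:fill-ball} and~\ref{lem:neck-contraction}, using the observation that any triangle in $\Gamma$ is automatically contained in the union of at most six cover elements from $\Oo$, each of controlled geometry. I first derive a uniform upper bound $L_1(v,D)$ on the length of an edge of $\Gamma$: by the construction at the end of Section~\ref{sec:filling_Einstein}, every edge has length at most $C(v,D)\,\max\{R^i_j,\rho\}$, and both $R^i_j=r^i_j/64$ and $\rho$ are bounded above by $D$, since each body ball and each neck embeds in $M$ and $\diam(M)\le D$. Thus any geodesic triangle in $\Gamma$ is a $1$-cycle $C$ with $\mass_1(C)\le 3L_1(v,D)$.

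Next I carry out a finite case analysis using the adjacency structure of the cover. Since two vertices of $\Gamma$ are joined by an edge only when the associated cover elements intersect, the three edges of the triangle lie in at most one body $\Bb^i_j$ together with at most two adjacent necks $\Nn^i_j$. In the body-only case, the triangle sits inside a concentric ball of radius $\le 3R^i_j\le r^i_j/2$ in a harmonic chart, so Lemma~\ref{lem:fill-ball} fills $C$ with $\mass_2(E)\le r^i_j\,\mass_1(C)\le 3D\,L_1(v,D)$. If the triangle meets a neck $\Nn^i_j$, I close up each maximal arc of $C$ lying in the neck by adjoining short geodesic arcs of length $\le L_1(v,D)$ in the adjacent body balls, forming a genuine $1$-cycle supported in the neck; I then apply Lemma~\ref{lem:neck-contraction} to push it into the child body at a cost of $C(v,D)\,D\,L_1(v,D)$ in $2$-chain mass, and finally fill the resulting cycle $C'\subset\Bb^i_j$ of mass $\le h_E(v,D)\,D+3L_1(v,D)$ as in the first case. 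Summing the contributions yields the desired constant $K_9(v,D)$, which depends only on $(v,D)$ through $r_0(v,D)=\Psi(\mb A(v,D))$, $C_S(v,D)$, the bubble-tree complexities, and the explicit constants from Lemmas~\ref{lem:fill-ball}--\ref{lem:neck-contraction}.

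The hard part is the neck--body interface. Lemma~\ref{lem:neck-contraction} is stated for $1$-cycles supported in a neck, while the portion of a triangle inside a neck is generically an arc whose endpoints lie on the neck--body boundary. I handle this by capping the arc off with a short curve in the adjacent body ball obtained by geodesic projection; the capping curves have length $\le L_1(v,D)$ and their fillings inside the body ball are controlled by Lemma~\ref{lem:fill-ball}. This is the same bookkeeping device used in~\cite[Sec.~4.2]{LW_HF1} in the general Ricci-bounded setting; only the explicit values of the constants change here, because all relevant scales are now tied to the Einstein harmonic-radius scale $r_0(v,D)=\Psi(\mb A(v,D))$ from Corollary~\ref{cor:harmonic_radius_scale}.
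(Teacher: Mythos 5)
Your proposal handles the body-only case the same way the paper does (cone in a harmonic chart via Lemma~\ref{lem:fill-ball}), but it diverges from the paper in the neck and mixed cases, and that divergence opens a gap. The paper disposes of a triangle lying in a neck by observing that it sits in the union of at most two of the convex enlarged trapezoids $\bar T_{i,j,\ell}$, so it is null-homotopic there and can be filled directly by a cone in the annulus with area $\lesssim\rho^2$ — a single self-contained step. You instead route everything through Lemma~\ref{lem:neck-contraction}, producing a residual cycle $C'\subset\Nn\cap\Bb$ of mass $\le h_E(v,D)\,\diam(\Bb)+3L_1(v,D)$, and then assert that $C'$ can be filled ``as in the first case,'' i.e.\ by coning in a single harmonic ball.

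That last step is the problem. The cycle $C'$ produced by Lemma~\ref{lem:neck-contraction} is supported on a cross-section sphere $S_\lambda/\Gamma$ with $\lambda\sim\rho\sim\diam(\Bb)$, so its diameter is of order $\diam(\Bb)$; but the harmonic radius in $\Bb$ is only guaranteed to be $r_0(v,D)\,\diam(\Bb)$ with $r_0(v,D)<1$, so $C'$ will generally not lie in a single ball $B_{r_h/2}(x)$ and Lemma~\ref{lem:fill-ball} does not apply to it. Moreover $C'$ is not a graph triangle, so you cannot invoke the present lemma on it; and fixing this by passing $C'$ through Lemma~\ref{lem:cycle-to-graph} and the combinatorial filling would be circular, since filling graph triangles is exactly what you are trying to prove. (Separately, you do not account for the body portions of a mixed triangle plus the capping arcs, and you should note why $C'$ is often actually zero when the triangle is already null-homotopic in the trapezoid union — but the missing justification for filling a nonzero $C'$ is the decisive gap.) The paper's direct coning in the annulus is the clean way out, and is the step your argument would need to reproduce rather than replace.
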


\begin{proof}
As in~\cite[Lemma~4.6]{LW_HF1}. If all three vertices lie inside
a single body, the triangle lies in a harmonic ball $B_{r_h/2}$ and can be
filled using Lemma~\ref{lem:fill-ball}. If they lie inside a single neck, the
triangle lies in the union of at most two trapezoids contained in
$\bar T_{i,j,\ell}$, and can be filled by a cone in the annulus, with area
$\lesssim \rho^2$. Mixed configurations are reduced to these two cases.
All constants can be controlled in terms of $r_0(v,D)$ and $D$.
\end{proof}

\subsection{Decomposition and the Einstein filling inequality}

We now put the pieces together.  The first step is to decompose a general
$1$-cycle according to the bubble--tree structure.

\begin{lemma}[Decomposition along the tree]\label{lem:decompose}
Let $(M^4,g)\in\Ee_1(4,v,D)$ and let $C\in\Zz_1(M;\Z)$ be a singular
Lipschitz $1$-cycle. Then there exists a decomposition
\[
 C=\sum_{i=1}^N C_i,
\]
with $N\le N_E(v,D)\,k_E(v,D)$, such that each $C_i$ is supported in a single
body or a single neck and
\[
 \sum_{i=1}^N \mass_1(C_i)\;\le\; C_0(v,D)\,\mass_1(C).
\]
\end{lemma}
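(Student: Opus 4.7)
The plan is to decompose $M$ into closed regions $R_1,\dots,R_N$, each contained in a single body or a single neck, by cutting along carefully chosen cross-sections inside every body--neck interface, and then to close up the resulting arcs of $C$ into honest $1$-cycles by adding short paths inside those cross-sections.

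First I would use Theorem~\ref{thm:bubble_tree_Ein}(ii)--(iii) to identify each body--neck collar with a product $(a,b)\times S^3/\Gamma^i_j$, where $b-a\ge c_1(v,D)>0$ and every slice $\{t\}\times S^3/\Gamma^i_j$ has diameter $\le c_2(v,D)$; both bounds follow from the harmonic-radius scale $r_0(v,D)$ together with the uniform bound $|\Gamma^i_j|\le C_\Gamma(\mb A(v,D))$. After approximating $C$ by a smooth representative in its homology class with negligible mass change, I would apply the coarea inequality to the height function $t$ on each collar to select a slice $H^{i,j}_\alpha=\{t=t^*_\alpha\}$ transverse to $C$ with
\[
 |C\cap H^{i,j}_\alpha|\;\le\;\frac{2}{c_1(v,D)}\,\mass_1(C\cap\text{collar}).
\]
Cutting $M$ along the union of all such slices produces regions $R_1,\dots,R_N$, each contained in one body or one neck, with $N\le\mathrm{const}\cdot N_E(v,D)\,k_E(v,D)$.

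Next, on each slice $H^{i,j}_\alpha$ I would fix a basepoint $*^{i,j}_\alpha$ and, for every $p\in C\cap H^{i,j}_\alpha$, a path $\gamma_p\subset H^{i,j}_\alpha$ from $p$ to $*^{i,j}_\alpha$ of length $\le c_2(v,D)$. For each region $R_k$ I would then set
\[
 C_k\;:=\;(C\cap R_k)\;+\;\sum_{p\in C\cap\partial R_k}\varepsilon_p\,\gamma_p,
\]
where $\varepsilon_p=\pm 1$ is chosen so that (i) each $\gamma_p$ enters the two $R_k$ adjacent to its slice with opposite signs, ensuring $\sum_k C_k=C$, and (ii) $\partial C_k=0$ in $R_k$, which holds because $C$ is a cycle and hence the signed count of crossings on each slice vanishes, so the contributions $\sum_p \varepsilon_p(*^{i,j}_\alpha-p)$ collapse. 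Summing masses, the original pieces contribute $\mass_1(C)$ in total and the closing paths contribute at most
\[
 2\,c_2(v,D)\sum_{i,j,\alpha}|C\cap H^{i,j}_\alpha|\;\le\;\frac{4\,c_2(v,D)}{c_1(v,D)}\,\mass_1(C),
\]
so $C_0(v,D):=1+4c_2/c_1$ works.

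The main obstacle is not any individual cut-and-paste estimate but rather setting up the collar coordinates and the basepoint--path system with constants depending only on $(v,D)$: one must verify that every body--neck interface carries a product structure with uniformly positive thickness $c_1(v,D)$ and uniformly bounded slice diameter $c_2(v,D)$, and that each slice $S^3/\Gamma^i_j$ is path-connected with diameter controlled by $|\Gamma^i_j|$. Both facts require unpacking the Cheeger--Naber neck model $\R^4/\Gamma^i_j$ from Theorem~\ref{thm:bubble_tree_Ein} in the Einstein setting; this is essentially done in~\cite{LW_HF1} for the larger Ricci-bounded class and goes through unchanged here, since the only input is the explicit bound on $|\Gamma^i_j|$ from Theorem~\ref{thm:bubble_tree_Ein}(ii).
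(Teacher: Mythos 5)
Your cut-and-reassemble scheme is essentially the one the paper (following \cite[Sec.~4.3]{LW_HF1}) has in mind: cut $C$ at the body--neck interfaces, close up the resulting arcs by short paths inside the cutting slices, and bound the added length via the controlled slice diameters and a judicious choice of cutting slice. Two points in your write-up need repair, though. First, the smoothing step is both unnecessary and problematic: replacing $C$ by a homologous smooth $\tilde C$ yields a decomposition of $\tilde C$, not of $C$, and the error $C-\tilde C$ is a $1$-cycle of small mass that is generally not supported in any single body or neck, so it cannot be absorbed into the $C_i$. You should slice the Lipschitz chain $C$ directly: coarea for rectifiable $1$-chains already gives, for a.e.\ $t$, a finite transversal $C\cap\{t=t^*\}$ with the desired cardinality bound, no smoothing required. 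Second, your justification that $\partial C_k=0$ --- ``because $C$ is a cycle and hence the signed count of crossings on each slice vanishes'' --- is not correct as stated: a $1$-cycle can cross a closed separating hypersurface with nonzero algebraic intersection number (e.g.\ $S^1\times\{pt\}$ crossing $\{pt\}\times S^3$ in $S^1\times S^3$); being a cycle only gives zero \emph{net} flux at each region, not zero flux per slice. What actually forces each per-slice signed count to vanish here is either the hypothesis $H_1(M;\Z)=0$, so $[C]\cdot[H^{i,j}_\alpha]=0$, or equivalently the fact from Theorem~\ref{thm:bubble_tree_Ein} that the dual adjacency graph is a tree, on which any conservative integer flow is zero. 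Either reason is available, but one of them must be invoked. Finally, your bound $\sum_{i,j,\alpha}|C\cap H^{i,j}_\alpha|\le \tfrac{2}{c_1}\mass_1(C)$ tacitly assumes the collars are pairwise disjoint; if they overlap with multiplicity $K(v,D)$ you pick up an extra factor $K$, which is harmless but should be acknowledged.
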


\begin{proof}
As in~\cite[Sec.~4.3]{LW_HF1}, one cuts $C$ whenever it crosses a boundary
component $\partial\Bb\cap\Nn$ and reassembles the resulting arcs into cycles
supported in individual bodies or necks. The number of such regions is bounded
by the bubble--tree complexity $N_E(v,D)\,k_E(v,D)$, and the total length
increases by at most a uniform multiplicative factor $C_0(v,D)$, coming from
bounded overlap of the decomposition and the uniform control on the diameter
of bodies and necks.
\end{proof}

\begin{corollary}[Reduction to bodies]\label{cor:reduce-to-bodies}
Let $C$ be as in Lemma~\ref{lem:decompose}. Then $C$ is homologous to a
$1$-cycle $C^{\mathrm{body}}$ supported in the union of body regions, i.e.
there exists a $2$-chain $E_0$ such that
\[
 \partial E_0=C-C^{\mathrm{body}},
\]
and
\[
 \mass_1(C^{\mathrm{body}})\le C_1(v,D)\,\mass_1(C),\ 
 \mass_2(E_0)\le C_2(v,D)\,\mass_1(C)+C_3(v,D).
\]
\end{corollary}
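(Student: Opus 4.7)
The plan is to iterate the tree decomposition and neck-contraction results of this section: first apply Lemma~\ref{lem:decompose} to split $C$ into pieces supported in single bodies or single necks, then invoke Lemma~\ref{lem:neck-contraction} to push every neck piece into the adjacent child body, collecting the replacement cycles into $C^{\mathrm{body}}$ and the local $2$-chains into $E_0$.

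Concretely, Lemma~\ref{lem:decompose} produces $C = \sum_{i=1}^N C_i$ with $N \le N_E(v,D)\,k_E(v,D)$, each $C_i$ supported in a single body or single neck, and $\sum_i \mass_1(C_i) \le C_0(v,D)\,\mass_1(C)$. Partition the indices as $I_{\mathrm{body}} \sqcup I_{\mathrm{neck}}$. For $i \in I_{\mathrm{body}}$ nothing is done. For $i \in I_{\mathrm{neck}}$ I would apply Lemma~\ref{lem:neck-contraction} inside the corresponding neck $\Nn^{k_i}_{j_i}$ to obtain a cycle $C_i'$ in the child body $\Bb^{k_i}_{j_i}$ and a $2$-chain $E_i$ with $\partial E_i = C_i - C_i'$, $\mass_1(C_i') \le h_E(v,D)\,D$, and $\mass_2(E_i) \le C(v,D)\,D\,\mass_1(C_i)$. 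Setting
\[
 C^{\mathrm{body}} := \sum_{i \in I_{\mathrm{body}}} C_i + \sum_{i \in I_{\mathrm{neck}}} C_i',
 \qquad
 E_0 := \sum_{i \in I_{\mathrm{neck}}} E_i,
\]
one has $\partial E_0 = C - C^{\mathrm{body}}$ by telescoping, $C^{\mathrm{body}}$ lies in $\bigcup_{k,j}\Bb^k_j$, and summing gives $\mass_2(E_0) \le C(v,D)\,D\,C_0(v,D)\,\mass_1(C)$, which already beats the claimed bound on $\mass_2(E_0)$ (so the $C_3(v,D)$ term is slack one can carry over from later steps).

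The subtle point is the bound on $\mass_1(C^{\mathrm{body}})$: since Lemma~\ref{lem:neck-contraction} controls $\mass_1(C_i')$ by a constant $h_E(v,D)\,D$ rather than linearly in $\mass_1(C_i)$, naive summation produces an additive term $N\,h_E(v,D)\,D$ that is absent from the stated conclusion. I would remove it by a dichotomy on $\mass_1(C)$. If $\mass_1(C) \ge 1$, the additive constant is absorbed into an enlarged $C_1(v,D)$. If $\mass_1(C) < 1$, each neck piece is too short to represent a nontrivial class in $H_1(\Nn^{k_i}_{j_i}) \cong H_1(S^3/\Gamma^{k_i}_{j_i})$, whose generators have length bounded below by a definite geometric multiple of $r_0(v,D)$ arising from the uniform annular structure of the necks in Theorem~\ref{thm:bubble_tree_Ein}. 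Hence every such short $C_i$ bounds inside its neck; applying Lemma~\ref{lem:neck-contraction} with $C_i' = 0$ contributes nothing to $C^{\mathrm{body}}$ and yields $\mass_1(C^{\mathrm{body}}) \le C_0(v,D)\,\mass_1(C)$ directly, with the residual 2-chain mass absorbed into $C_3(v,D)$. I expect this short-cycle case analysis to be the only real obstacle; the remainder of the argument is routine summation of the constants inherited from Lemmas~\ref{lem:decompose} and~\ref{lem:neck-contraction}.
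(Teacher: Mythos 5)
Your strategy is the same as the paper's: apply Lemma~\ref{lem:decompose} and then push each neck piece into the adjacent child body via Lemma~\ref{lem:neck-contraction}, summing the error chains. You are also right to flag the subtle point the paper glosses over, namely that Lemma~\ref{lem:neck-contraction} bounds $\mass_1(C_i')$ by $h_E(v,D)\,\diam(\Bb)$ rather than by a multiple of $\mass_1(C_i)$, so naive summation produces an additive term which the corollary as stated does not allow for $\mass_1(C^{\mathrm{body}})$.

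However, the dichotomy you propose to remove that additive term does not work as written. You split at $\mass_1(C)<1$ and claim that below this threshold each neck piece must be null-homologous because the generators of $H_1(S^3/\Gamma^{i}_{j})$ have length bounded below by ``a definite geometric multiple of $r_0(v,D)$.'' But $r_0(v,D)$ is a \emph{dimensionless ratio} (harmonic radius divided by body diameter), not an absolute length scale, and the neck radii $\rho$ in the bubble--tree decomposition of Theorem~\ref{thm:bubble_tree_Ein} can be arbitrarily small across the class $\Ee_1(4,v,D)$. A piece $C_i$ with $\mass_1(C_i)<1$ sitting in a tiny neck can therefore still be homologically nontrivial, so your second branch fails. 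The correct observation, which neither the paper nor your proposal spells out, is a \emph{scale-relative} dichotomy in each neck: if the radially projected cycle is trivial in $H_1(S^3/\Gamma^{i}_{j})$ take $C_i'=0$; if it is nontrivial its length is at least a constant (depending on $|\Gamma^{i}_{j}|\le C_\Gamma$) times the cross-sectional radius $\lambda\sim\rho$, and since $\mass_1(C_i')\le h_E(v,D)\,\lambda$, one gets $\mass_1(C_i')\le h_E(v,D)\,C_\Gamma(v,D)\,\mass_1(C_i)$. This yields the purely multiplicative bound directly, with no global threshold and no additive slack to absorb. You should replace your dichotomy by this per-neck comparison; the rest of your summation is fine.
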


\begin{proof}
Apply Lemma~\ref{lem:neck-contraction} to each neck-supported component
$C_i$ in the decomposition of Lemma~\ref{lem:decompose}. This replaces each
$C_i$ by a cycle supported in the adjacent body at the expense of adding a
$2$-chain whose mass is controlled by $C(v,D)\,\rho\,\mass_1(C_i)$.
Summing over all necks and using the uniform bounds on the number and geometry
of necks yields the stated estimates.
\end{proof}

We are now ready to state and prove the main filling inequality in the Einstein
class.

\begin{theorem}[Einstein filling inequality]\label{thm:Ein_filling}
There exist $f_1^{\mathrm{Ein}}(v,D), f_2^{\mathrm{Ein}}(v,D)>0$
such that for any $(M^4,g)\in \Ee_1(4,v,D)$ and any $C\in \Zz_1(M;\Z)$ there
exists $E\in\Cc_2(M;\Z)$ with
\[
 \partial E=C,\ 
 \mass_2(E)\le f_1^{\mathrm{Ein}}(v,D)\,\mass_1(C)+f_2^{\mathrm{Ein}}(v,D).
\]
Equivalently, the first homological filling function satisfies
\[
 \HF_1(l)\le f_1^{\mathrm{Ein}}(v,D)\,l+f_2^{\mathrm{Ein}}(v,D)
 \ \text{for all }l\ge 0.
\]
\end{theorem}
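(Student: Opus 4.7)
The plan is to produce $E$ by combining the geometric fillings from Lemmas~\ref{lem:fill-ball}, \ref{lem:neck-contraction}, and \ref{lem:cycle-to-graph} with the combinatorial filling from Lemma~\ref{lem:comb-improved} applied to the nerve $\Nc$, and then converting each simplicial $2$-simplex back to a geodesic triangle filled by Lemma~\ref{lem:triangles}. All constants entering each step depend only on $v$ and $D$ through the bubble--tree complexity $\ti N_E(v,D)$, the combinatorial constant from Lemma~\ref{lem:comb-improved}, and the harmonic-radius scale $r_0(v,D)$ of Corollary~\ref{cor:harmonic_radius_scale}.

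First I would invoke Corollary~\ref{cor:reduce-to-bodies} to replace $C$ by a homologous cycle $C^{\mathrm{body}}$ supported in the union of body regions, together with a $2$-chain $E_0$ satisfying $\mass_2(E_0)\le C_2(v,D)\mass_1(C)+C_3(v,D)$ and $\mass_1(C^{\mathrm{body}})\le C_1(v,D)\mass_1(C)$. Decomposing $C^{\mathrm{body}}$ via Lemma~\ref{lem:decompose} into pieces $C_i$, each supported in a single body, and then applying Lemma~\ref{lem:cycle-to-graph} body by body, I would replace each $C_i$ by a combinatorial cycle $C_i'\subset\Gamma$ at the cost of a $2$-chain $E_i$ with $\mass_2(E_i),\,\mass_1(C_i')\le C_\Gamma(v,D)\mass_1(C_i)$. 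Writing $C^{\mathrm{tot}}_\Gamma:=\sum_i C_i'$ gives a $1$-cycle in $\Gamma$ with total mass bounded linearly in $\mass_1(C)$.

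The key step is to transfer $C^{\mathrm{tot}}_\Gamma$ to the nerve $\Nc$. Recording each edge traversal with multiplicity along the canonical bijection between edges of $\Gamma$ and $1$-simplices of $\Nc$ produces a simplicial $1$-cycle $\hat C=\sum_i a_1^i\sigma_i^1$. Because every edge of $\Gamma$ has length at least a uniform lower bound $\ell_{\min}(v,D)>0$ (a consequence of the harmonic-radius scale $r_0(v,D)$ and the uniform lower bound on $\diam(\Bb^i_j)$ and $\rho$), the maximum coefficient satisfies
\[
 \max_i |a_1^i|\;\le\;\frac{\mass_1(C^{\mathrm{tot}}_\Gamma)}{\ell_{\min}(v,D)}\;\le\;K'(v,D)\,\mass_1(C).
\]
Since $H_1(M;\Z)=0$ and $\Oo$ is a good cover, the nerve lemma gives $\Nc\simeq M$, so $\hat C$ is a simplicial $1$-boundary. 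Applying Lemma~\ref{lem:comb-improved} with $k=1$ and $n_0\le\ti N_E(v,D)$ yields a simplicial $2$-chain $\hat E=\sum_j b_j\Delta_j$ in $\Nc$ with $\partial\hat E=\hat C$ and $\sum_j |b_j|\le K_{\mathrm{comb}}(v,D)\max_i|a_1^i|$, where $K_{\mathrm{comb}}$ depends only on $\ti N_E(v,D)$.

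Finally, for each $2$-simplex $\Delta_j$ appearing in $\hat E$, the corresponding triple of cover elements determines a geodesic triangle in $\Gamma$, which bounds a $2$-chain $F_j\subset M$ with $\mass_2(F_j)\le K_9(v,D)$ by Lemma~\ref{lem:triangles}. Setting
\[
 E:=E_0\;+\;\sum_i E_i\;+\;\sum_j b_j F_j
\]
and adding the graph-to-cycle fillings that realise each edge of $\Gamma$ by its defining geodesic, one obtains $\partial E=C$ and, after collecting the linear bounds from each step,
\[
 \mass_2(E)\;\le\;\bigl(C_2+C_\Gamma C_1+K_9 K_{\mathrm{comb}} K'\bigr)\mass_1(C)+C_3,
\]
giving the stated form with $f_1^{\mathrm{Ein}}$, $f_2^{\mathrm{Ein}}$ depending only on $(v,D)$. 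The main obstacle I expect is in the transfer from $\Gamma$ to $\Nc$: Lemma~\ref{lem:comb-improved} bounds the filling in terms of $\max_i|a_1^i|$, the maximum \emph{edge-multiplicity} of $\hat C$, rather than its total $\ell^1$-norm, so I must carefully argue that the uniform lower bound $\ell_{\min}(v,D)$ on edge lengths forces edge-multiplicity to grow at most linearly in $\mass_1(C)$. This in turn uses the full strength of the Einstein harmonic-radius scale from Corollary~\ref{cor:harmonic_radius_scale}; otherwise the combinatorial factor $\binom{n_0}{2}^{\binom{n_0}{2}/2}$ would multiply a quantity that itself already grows with $\mass_1(C)$, destroying linearity. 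Equivalence of the two formulations of $\HF_1$ then follows from Theorem~\ref{thm:NR} as in~\cite{LW_HF1}.
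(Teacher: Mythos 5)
Your proposal follows essentially the same route as the paper's proof: reduce to body-supported cycles via Corollary~\ref{cor:reduce-to-bodies}, push to the geodesic graph $\Gamma$ via Lemma~\ref{lem:cycle-to-graph}, bound the coefficients of the induced simplicial $1$-cycle in $\Nc$ linearly in $\mass_1(C)$ (the paper argues via the simplicial length of $\widehat C$, while you use a uniform lower bound on edge length --- the same estimate phrased differently), fill combinatorially via Lemma~\ref{lem:comb-improved}, and lift $2$-simplices to geodesic-triangle fillings via Lemma~\ref{lem:triangles}. One small slip in your last sentence: the equivalence of the two formulations stated in the theorem is immediate from the definition of $\HF_1$ and has nothing to do with Theorem~\ref{thm:NR}, which is used only to pass from $\HF_1$ to $A_{\min}$ in Theorem~\ref{thm:main}.
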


\begin{proof}
By Corollary~\ref{cor:reduce-to-bodies}, $C$ is homologous to a cycle
$C^{\mathrm{body}}$ supported in the union of bodies, with
\[
 \partial E_0=C-C^{\mathrm{body}},\quad
 \mass_2(E_0)\le C_2(v,D)\,\mass_1(C)+C_3(v,D),
\]
and
\(
 \mass_1(C^{\mathrm{body}})\le C_1(v,D)\,\mass_1(C).
\)

Next, approximate $C^{\mathrm{body}}$ by a cycle in the geodesic graph
$\Gamma$ using Lemma~\ref{lem:cycle-to-graph}. We obtain a cycle
$\widehat C\subset\Gamma$ and a $2$-chain $E_1$ such that
\[
 \partial E_1=C^{\mathrm{body}}-\widehat C,\ 
 \mass_2(E_1)\le C_\Gamma(v,D)\,\mass_1(C^{\mathrm{body}})
 \le C_\Gamma(v,D)\,C_1(v,D)\,\mass_1(C).
\]

The remaining step is the chain-level passage from the graph $\Gamma$ to the
nerve $\Nc$. Instead of using the partition-of-unity map directly on
$\widehat C$, we invoke the graph-to-nerve straightening procedure from
\cite[Sec.~4]{LW_HF1}. Applied to the graph cycle $\widehat C$, it yields a
simplicial $1$-boundary
\[
 c_1=\sum_i a_1^i\,\sigma_i^1
\]
in $\Nc$ and a $2$-chain $E_{\mathrm{st}}$ in $M$ such that
\[
 \partial E_{\mathrm{st}}=\widehat C-\gamma(c_1),
\]
where $\gamma(c_1)$ denotes the edge cycle in $\Gamma$ corresponding to
$c_1$, and
\[
 \sum_i |a_1^i| \le C_L(v,D)\,\mass_1(C),
 \qquad
 \mass_2(E_{\mathrm{st}})\le C_L(v,D)\,\mass_1(C).
\]
In particular,
\[
 \max_i|a_1^i|\le C_L(v,D)\,\mass_1(C).
\]

Let $n_0$ be the number of vertices of $\Nc$, so $n_0\le \ti N_E(v,D)$.
Since $c_1$ is a simplicial boundary in $\Nc$, Lemma~\ref{lem:comb-improved}
with $k=1$ yields a simplicial $2$-chain
\(
 C_2=\sum a_2^i\sigma_i^2
\)
with $\partial C_2=c_1$ and
\[
 \sum_i |a_2^i|
 \;\le\; B_1(n_0)\,\max_i|a_1^i|
 \;\le\; B_1(\ti N_E(v,D))\,C_L(v,D)\,\mass_1(C),
\]
where $B_1(n_0)$ is the combinatorial coefficient from
\eqref{eq:comb-l1-bound}. Each $2$-simplex in $\Nc$ corresponds to a geodesic
triangle in $\Gamma$, which bounds a $2$-chain in $M$ of area at most
$K_9(v,D)$ by Lemma~\ref{lem:triangles}. Thus $C_2$ lifts to a $2$-chain
$E_2$ in $M$ with
\[
 \partial E_2=\gamma(c_1),\ 
 \mass_2(E_2)
 \le K_9(v,D)\,\sum_i |a_2^i|
 \le K_9(v,D)\,B_1(\ti N_E(v,D))\,C_L(v,D)\,\mass_1(C).
\]

Finally, set
\[
 E:=E_0+E_1+E_{\mathrm{st}}+E_2.
\]
Then $\partial E=C$, and combining the three estimates we obtain
\[
 \mass_2(E)
 \le \bigl(C_2+C_\Gamma C_1
           +C_L+K_9 B_1(\ti N_E)C_L\bigr)\mass_1(C)\;+\;C_3.
\]
We may therefore take
\[
 f_1^{\mathrm{Ein}}(v,D)
 := C_2+C_\Gamma C_1
    +C_L+K_9 \,B_1(\ti N_E)\,C_L,
\]
and
\[
 f_2^{\mathrm{Ein}}(v,D):=C_3(v,D).
\]
By construction, all constants involved depend only on the bubble--tree data
$r_0(v,D)$, $C_\Gamma(v,D)$, $\ti N_E(v,D)$, $N_E(v,D)$, $k_E(v,D)$ and on
the Sobolev/$L^2$-regularity inputs from Section~\ref{sec:Einstein_prelim};
in particular, they depend only on $(v,D)$.
\end{proof}

\section*{Acknowledgements}
The author Z. Zhu was supported in part by National Key R\&D Program of China 2024YFA1015300 and NSFC 1250010531.

\end{document}